\newcommand{\etal}{\textit{et al.}\ }
\newcommand{\R}{\mathbb{R}}
\newcommand{\diag}{\operatorname{diag}}
\renewcommand{\vec}[1]{\mathbf{#1}}
\newcommand{\mat}[1]{\vec{\vec{#1}}}
\newcommand{\Id}{\mat{I}}
\newcommand{\pdfs}{particle distribution functions}
\newcommand{\zero}{\mat{\emptyset}}
\renewcommand{\R}{\mathbb{R}}
\renewcommand{\O}{\mathcal{O}}
\DeclareMathOperator*{\Span}{span}
\DeclareMathOperator*{\Dim}{dim}
\newtheorem{theorem}{Theorem}
\newtheorem{remark}[theorem]{Remark}
\newtheorem{definition}[theorem]{Definition}
\author{Philipp Otte\footnote{Forschungszentrum J\"ulich GmbH, Institute for Advanced Simulation, J\"ulich Supercomputing Centre, 52425 J\"ulich, Germany} \footnote{MathCCES, RWTH Aachen University, Schinkelstra\ss e 2,\ 52056 Aachen, Germany} \footnote{Corresponding author: p.otte@fz-juelich.de} \footnote{ORCiD ID: 0000-0002-1586-2274} and Martin Frank\footnote{Karlsruhe Institute of Technology, Steinbuch Center for Computing, Hermann-von-Helmholtz-Platz 1, 76344 Eggenstein-Leopoldshafen, Germany}}
\title{A Structured Approach to the Construction of Stable Linear Lattice Boltzmann Collision Operator}
\begin{document}
\maketitle

\begin{abstract}
We introduce a structured approach to the construction of linear BGK-type collision operators ensuring that the resulting Lattice-Boltzmann methods are stable with respect to a weighted $L^2$-norm.
The results hold for particular boundary conditions including periodic, bounce-back, and bounce-back with flipping of sign boundary conditions.
This construction uses the equivalent moment-space definition of BGK-type collision operators and the notion of stability structures as guiding principle for the choice of the equilibrium moments for those moments influencing the error term only but not the order of consistency.
The presented structured approach is then applied to the 3D isothermal linearized Euler equations with non-vanishing background velocity.
Finally, convergence results in the strong discrete $L^\infty$-norm highlight the suitability of the structured approach introduced in this manuscript.
\end{abstract}

\section{Introduction}
\label{sec: Introduction}

The Lattice-Boltzmann method (LBM) has proven to be a viable tool for simulating flows governed by the Navier-Stokes equations.
The LBM offers two major benefits: first, simplicity of implementation; and second, its fully explicit structure.
Due to this explicit nature and the beneficial communication pattern, it suites today's distributed memory high-performance computer architectures extremely well.
In addition, its stream-collide structure also maps well on GPUs (see Geier and Schönherr \cite{GeierSchoenherr2017}).
Still, the LBM should not only be seen as a means for simulating flows governed by the Navier-Stokes equations but also allows the solution of different systems of partial differential equations (see for example Kataoka and Tsutahara \cite{kataoka2004lattice} and Chai \etal \cite{ChaiHeGuoShi2018}).
\medskip
\\
In the derivation of numerical approximations, two properties are paramount: consistency and stability.
While consistency is usually known \textit{a priori}, stability often can only be assessed \textit{a posteriori}.
Hence it is beneficial to include some notion simplifying the assessment of stability into the process of deriving a numerical approximation.
This is particularly important for the LBM which allows for many degrees of freedom in the derivation of collision operators since usually not for all moments which are supported by the used velocity set consistency prescribes an associated equilibrium moment.
In the most popular collision operator (compare He and Luo \cite{HeLuo97From}), these equilibrium moments not prescribed by consistency are chosen such that the equilibrium distribution is an expansion of the Maxwell distribution.
A different approach is chosen in the field of the entropic LBM (see for example Bogosian \etal \cite{EntropicLatticeBoltzmannMethods} and Chikatamarla \etal \cite{EntropicLBM3D}) where the equilibrium moments are chosen such that the equilibrium distribution minimizes an entropy function.
A third approach (see for example Dubois and Lallemand \cite{DuboisLallemand2009} \cite{DuboisLallemand2011}) chooses the equilibrium moments such that the overall consistency error is minimized.
Out of these three approaches, only the entropic one has a notion of stability built into its process of derivation.
Still, in the entropic LBM, strict second-order consistency at every point in time and lattice node is sacrificed to ensure stability.
\medskip
\\
In Otte and Frank \cite{Otte2016}, the authors describe the derivation of stable and second-order consistent LBMs for the compressible Euler equations with zero background velocity for mono- and polyatomic gases.
This derivation is based on the discretization of the equilibrium in the acoustic limit presented by Bardos \etal \cite{Bardos2000} and used an \textit{a posteriori} stability analysis using sufficient conditions for stability in the discrete $L^2$-norm also derived in Otte and Frank \cite{Otte2016}.
The attempts of one of the authors to derive stable LBMs for the isothermal linearized Euler equations with non-vanishing background velocity using the afore mentioned approaches proved not successful.
The main challenges in this derivation are that approaches based on expansion of the Maxwellian distribution require an excessive number of moments to be supported by the velocity set or that stability of the method cannot be analyzed in general but only on a case to case basis.
\medskip
\\
The notion of stability structures (introduced by Banda \etal \cite{banda2006}, extended analysis and application by Junk and Yong \cite{JunkYong2009}, Rheinländer \cite{Rheinlaender2010}, and Yong \cite{Yong2009862}) presents a structured approach towards the assessment of stability of LBMs in weighted $L^2$-norms. This notion is based on the stability theory for hyperbolic relaxation systems by Yong (cf. \cite{Yong2001}).
In this work, we use the notion of stability structures as guiding principle for the derivation of stable LBMs using linear collision operators.
We first show that a clever choice of the moment matrix used to map particle densities onto moments drastically lowers the number of conditions for the existence of a stability structure.
Then, the number of conditions is reduced further by altering the moment matrix such that a subset of the condition is automatically fulfilled.
\medskip
\\
This manuscript is structured as follows.
In section \ref{sec: LBM} the LBM with BGK-type collision operators and the notion of stability structures are introduced.
Subsequently, the conditions for existence of stability structures are applied to  linear BGK-type collision operators with collisions defined in moment space in section \ref{sec: BGK}.
In the following section \ref{sec: relative}, we introduce the concept of relative collision operators and show that the formulation of BGK-type collision operators using relative moment matrices lowers the number of conditions for existence of a stability structure.
This concept is extended to partially relative collision operator which eliminates a subset of the conditions for existence of a stability structure by applying a truncated Gram-Schmid orthogonalization to a subset of the rows of the relative moment matrix.
In section \ref{sec: 3D LEE}, we use the concept of partially relative schemes to construct collision operators for the 3D isothermal linearized Euler equations.
Finally, we present convergence results for the 3D isotherma linearized Euler equations in section \ref{sec: results} and conclude with some further remarks.

\section{The Lattice-Boltzmann Method}
\label{sec: LBM}

In the LBM, a system of partial differential equations is not discretized directly as is the case in most approaches including finite differences, finite volume methods, the finite element method, or spectral methods.
Instead, the evolution of the densities of virtual particles is simulated on a lattice.
In this work, the lattice $\mathcal{L} \subset \R^D$ is assumed periodic and equidistant with dimensionless spacing $\Delta t$ ($D$ denotes the  spatial dimension).
The simulation advances the densities on the temporal grid $\mathcal{T} = \left\{ k \Delta t: k = 0,\dots,k_{\text{final}}\right\}$ with the same dimensional spacing $\Delta t$.
\medskip
\\
For each lattice node $\vec{x} \in \mathcal{L}$ and timestep $t \in \mathcal{T}$, we assume $n$ densities $f_i(t,\vec{x})$ for $i=1,\dots,n$ for virtual particles moving with velocities $\vec{c}_i \in \R^D$.
The densities at node $\vec{x}$ and timestep $t$ are collected in the density vector $\vec{f}(t,\vec{x}) = ( f_1(t,\vec{x}),\dots,f_n(t,\vec{x}))^T$.
The discrete velocities $\vec{c}_i$ for $i=1,\dots,n$ are chosen such that:
\begin{itemize}
	\item if $\vec{x} \in \mathcal{L}$ then $\vec{x} + \Delta t \vec{c}_i \in \mathcal{L}$; and
	\item the velocity set $\mathcal{S} = \left\{ \vec{c}_i: i=1,\dots,n \right\}$ is symmetric, i.e. for $\vec{c} \in \mathcal{S}$ also $-\vec{c} \in \mathcal{S}$.
\end{itemize}
In this manuscript, the particle densities are evolved according to the Lattice-Boltzmann equation with BGK-type collision operator:
\begin{align}
	f_i(t+\Delta t,\vec{x} + \Delta t \vec{c}_i) = f_i(t,\vec{x}) + \frac{1}{\tau} \left( f_i^{eq}[\vec{f}(t,\vec{x})] - f_i(t,
\vec{x}) \right).\label{eq: LBE}
\end{align}
Here, $f_i^{eq}: \R^n \to \R$ for $i=1,\dots,n$ denotes the mapping of the particle densities onto their associated equilibrium particle densities and $\tau \in \R^+$ the relaxation time with which the particle densities are relaxed towards their equilibrium.
Macroscopic quantities are obtained from the particle densities via computation of moments, i.e. weighted sums:
\begin{align}
	\sum_{i=1}^n \zeta(\vec{c}_i) f_i(t,\vec{x}),
\end{align}
with different weight functions $\zeta(\cdot)$.
Important moments for LBMs simulating fluid dynamics are density and momentum:
\begin{align}
	\sum_{i=1}^n f_i(t,\vec{x}) = \rho(t,\vec{x}), \quad \sum_{i=1}^n \vec{c}_i f_i(t,\vec{x}) = \rho \vec{u}.
\end{align}
A moment defined by function $\zeta(\cdot)$ is called conserved moments if:
\begin{align*}
\sum_{i=1}^n \zeta(\vec{c}_i) f_i(t,\vec{x}) = \sum_{i=1}^n \zeta(\vec{c}_i) f^{eq}_i[\vec{f}(t,\vec{x})]\ \text{for every}\ \vec{f} \in \R^D.
\end{align*}
The macroscopic system of partial differential equations solved by an LBM is obtained using a combination of Taylor expansion of (\ref{eq: LBE}), asymptotic expansion (Caiazzo \etal \cite{AnalysisTechniques2009}), Chapman-Enskog expansion (Junk \etal \cite{Junk2005}), or Maxwell iteration (see Yong \etal \cite{YongZhaoLuo21016}), and mapping of moments.
Note that the choice of the maps $f_i^{eq}$, the velocity set $\mathcal{S}$, and the relaxation time $\tau$ determine which macroscopic system of partial differential equations is solved.
\medskip
\\
Based on Yong's \cite{Yong2001} results for stability of hyperbolic relaxation systems, Banda, Yong, and Klar \cite{banda2006} and Junk and Yong \cite{JunkYong2009} introduced the notion of stability structures providing a structured approach to stability analysis for LBMs.
In this manuscript, only linear collision operators are considered such that collision term $\frac{1}{\tau} \left( f_i^{eq}[\vec{f}(t,\vec{x})] - f_i(t,\vec{x}) \right)$ can be written in linear form:
\begin{align}
	\label{eq: linear collision term}
	\mat{J} = \frac{1}{\tau}\left( \mat{E}_{pdf} - \Id \right),
\end{align}
with $\mat{E}_{pdf}\ \vec{f} = \vec{f}^{eq}[\vec{f}]$ for all $\vec{f} \in \R^n$.
In the remainder of this work, $\mat{J}$ is called collision matrix.
With this, we can now introduce the notion of (pre-)stability structures for collision matrices $\mat{J}$ of form (\ref{eq: linear collision term}).
\begin{definition}{Rheinländer \cite{Rheinlaender2010}}
\label{def: stab struct}
A collision matrix $\mat{J} \in \R^{n\times n}$ is said to have a pre-stability structure $\left(\mat{P},\vec{a},\vec{s}\right)$ if there exists an invertible matrix $\mat{P} \in \R^{n \times n}$ and vectors $\vec{s} = -\diag(s_1,\dots,s_n)^T \in \R^n$ and $\vec{a} = (a_1,\dots,a_n)^T \in \R^n$ such that
\begin{enumerate}
	\item $\mat{P}\ \mat{J} = - \diag(s_1,\dots,s_n)\ \mat{P}$,\\
	\item $\mat{P}^T\ \mat{P} = \diag(a_1,\dots,a_n)$.
\end{enumerate}
Moreover, the pre-stability structure becomes a stability structure if
\begin{align*}
	s_k \in [0,2]\ \text{for all}\ k \in \{1,\dots,n\}.
\end{align*}
\end{definition}
Based on the concept of stability structures, Junk and Yong \cite{JunkYong2009} stated the following Theorem \ref{th: stab struct}:
\begin{theorem}{Junk and Yong \cite{JunkYong2009}}
\label{th: stab struct}
Assume the collision matrix of a LBM admits a stability structure $\left(\mat{P},\vec{a},\vec{s}\right)$ in the sense of Definition \ref{def: stab struct}.
Then, for periodic initial data and periodic lattice $\mathcal{L}$ solution $\vec{f}(\cdot,\cdot)$ is stable in a weighted $L^2$-norm induced by $\mat{P}$:
\begin{align}
	\label{eq: stab weighted L2}
	||\vec{f}(t+\Delta t, \cdot)||_{\mat{P}} \leq ||\vec{f}(t,\cdot)||_{\mat{P}},
\end{align}
with
\begin{align*}
	||\vec{f}(t,\cdot)||_{\mat{P}} = \sum_{\vec{x} \in \mathcal{L}} ||\vec{f}(t,\vec{x})||_{\mat{P}}
\end{align*}
and weighted $L^2$-norm induced by $\mat{P}$:
\begin{align*}
	||\vec{f}(t,\vec{x})||_{\mat{P}} = \left( ||\mat{P}\ \vec{f}(t,\vec{x})||^2 \right)^{\frac{1}{2}}.
\end{align*}
\end{theorem}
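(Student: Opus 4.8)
The plan is to pass to Fourier space over the periodic lattice, where streaming diagonalizes, and then to show that the resulting one-step amplification matrix is non-expansive in the $\mat{P}$-weighted norm for \emph{every} Fourier mode. Writing the post-collision state as $(\Id + \mat{J})\,\vec{f}(t,\vec{x})$ --- which is exactly the right-hand side of (\ref{eq: LBE}) --- one update is a spatially homogeneous collision followed by component-wise streaming, $f_i(t+\Delta t,\vec{x}+\Delta t\vec{c}_i) = [(\Id+\mat{J})\,\vec{f}(t,\vec{x})]_i$. Since $\mathcal{L}$ and the data are periodic, I would apply the discrete Fourier transform $\hat{\vec{f}}(t,\vec{k}) = \sum_{\vec{x}\in\mathcal{L}}\vec{f}(t,\vec{x})\,e^{-i\vec{k}\cdot\vec{x}}$. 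Streaming the $i$-th component by $\Delta t\vec{c}_i$ becomes multiplication by the phase $e^{-i\vec{k}\cdot\Delta t\vec{c}_i}$, so the update reads
\begin{align*}
\hat{\vec{f}}(t+\Delta t,\vec{k}) = \mat{D}(\vec{k})\,(\Id+\mat{J})\,\hat{\vec{f}}(t,\vec{k}), \qquad \mat{D}(\vec{k}) = \diag\!\left(e^{-i\vec{k}\cdot\Delta t\vec{c}_1},\dots,e^{-i\vec{k}\cdot\Delta t\vec{c}_n}\right),
\end{align*}
with collision acting identically in every mode. It therefore suffices to bound the amplification matrix $\mat{G}(\vec{k}) = \mat{D}(\vec{k})\,(\Id+\mat{J})$ in the $\mat{P}$-norm, uniformly in $\vec{k}$.

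The estimate splits into the two ingredients supplied by the stability structure. First, writing $\mat{A} = \mat{P}^T\mat{P} = \diag(a_1,\dots,a_n)$ (condition 2), the diagonal streaming matrix $\mat{D}(\vec{k})$ is an \emph{isometry} of the $\mat{P}$-norm: for any $\vec{w}$,
\begin{align*}
\|\mat{P}\,\mat{D}(\vec{k})\,\vec{w}\|^2 = \vec{w}^{\,*}\,\mat{D}(\vec{k})^{*}\mat{A}\,\mat{D}(\vec{k})\,\vec{w} = \vec{w}^{\,*}\mat{A}\,\vec{w} = \|\mat{P}\,\vec{w}\|^2,
\end{align*}
since $\mat{D}(\vec{k})$ and $\mat{A}$ are both diagonal and the entries of $\mat{D}(\vec{k})$ have unit modulus; this is precisely where the diagonality of $\mat{P}^T\mat{P}$ is indispensable. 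Second, setting $\mat{S} = \diag(s_1,\dots,s_n)$, condition 1 gives $\mat{P}(\Id+\mat{J}) = \mat{P} + \mat{P}\mat{J} = (\Id-\mat{S})\,\mat{P}$, so that
\begin{align*}
\|\mat{P}(\Id+\mat{J})\,\vec{v}\|^2 = \|(\Id-\mat{S})\,\mat{P}\,\vec{v}\|^2 = \sum_{k=1}^{n}(1-s_k)^2\,|(\mat{P}\vec{v})_k|^2 \leq \|\mat{P}\,\vec{v}\|^2,
\end{align*}
where the inequality uses $s_k\in[0,2]$, i.e. $|1-s_k|\le 1$ --- the defining property that upgrades a pre-stability structure to a stability structure.

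Combining the two bounds (with $\vec{w}=(\Id+\mat{J})\,\vec{v}$ in the first) yields $\|\mat{P}\,\mat{G}(\vec{k})\,\vec{v}\| = \|\mat{P}(\Id+\mat{J})\,\vec{v}\| \leq \|\mat{P}\,\vec{v}\|$ for every $\vec{k}$ and every $\vec{v}$. Finally I would sum over all modes and invoke Parseval's identity for the discrete Fourier transform: since $\mat{A}$ is a constant matrix, the weighted space norm $\sum_{\vec{x}}\|\mat{P}\vec{f}(t,\vec{x})\|^2$ equals a fixed multiple of $\sum_{\vec{k}}\|\mat{P}\hat{\vec{f}}(t,\vec{k})\|^2$, whence
\begin{align*}
\sum_{\vec{k}}\|\mat{P}\,\hat{\vec{f}}(t+\Delta t,\vec{k})\|^2 = \sum_{\vec{k}}\|\mat{P}\,\mat{G}(\vec{k})\,\hat{\vec{f}}(t,\vec{k})\|^2 \leq \sum_{\vec{k}}\|\mat{P}\,\hat{\vec{f}}(t,\vec{k})\|^2,
\end{align*}
which transports back to the claimed bound (\ref{eq: stab weighted L2}). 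The only genuinely delicate point is the bookkeeping that makes the two halves live in the same norm: streaming is diagonal in the particle basis while collision is diagonalized only after applying $\mat{P}$, and it is exactly the combination of the two stability-structure conditions --- diagonality of $\mat{P}^T\mat{P}$ together with the intertwining $\mat{P}\mat{J}=-\mat{S}\mat{P}$ --- that lets both the unitary streaming phases and the relaxation factors act non-expansively with respect to one and the same $\mat{P}$-weighted inner product. Everything else is routine.
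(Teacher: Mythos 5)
Your proof is correct, but there is no proof in the paper to compare it against: Theorem \ref{th: stab struct} is imported verbatim from Junk and Yong \cite{JunkYong2009}, citation in place of proof. Measured against the argument in that reference, your Fourier route is a genuine (if close) variant. The standard proof never leaves physical space: because $\mat{P}^T\mat{P}=\diag(a_1,\dots,a_n)$ is diagonal, the global squared norm splits component-wise, streaming merely permutes each component's values over the periodic lattice, and collision is pointwise non-expansive by the intertwining relation, so the entire proof is the chain
\begin{align*}
\sum_{\vec{x}\in\mathcal{L}}\|\mat{P}\,\vec{f}(t+\Delta t,\vec{x})\|^2
&= \sum_{j=1}^n a_j \sum_{\vec{x}\in\mathcal{L}} \Big( \big[(\Id+\mat{J})\,\vec{f}(t,\vec{x}-\Delta t\,\vec{c}_j)\big]_j \Big)^{2}\\
&= \sum_{\vec{x}\in\mathcal{L}}\big\|\mat{P}\,(\Id+\mat{J})\,\vec{f}(t,\vec{x})\big\|^2
= \sum_{\vec{x}\in\mathcal{L}}\big\|(\Id-\mat{S})\,\mat{P}\,\vec{f}(t,\vec{x})\big\|^2
\leq \sum_{\vec{x}\in\mathcal{L}}\|\mat{P}\,\vec{f}(t,\vec{x})\|^2,
\end{align*}
with $\mat{S}=\diag(s_1,\dots,s_n)$ and $|1-s_k|\leq 1$; passing from the first to the second line is just the reindexing $\vec{x}\mapsto\vec{x}+\Delta t\,\vec{c}_j$ of each inner sum, legitimate by periodicity. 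Your two ingredients are exactly these two facts in Fourier clothing (unit-modulus streaming phases against a diagonal weight, and $\mat{P}(\Id+\mat{J})=(\Id-\mat{S})\mat{P}$), so the mathematical content is identical; what the Fourier detour costs is Parseval, and what it loses is reach: the permutation argument is the one that extends (under suitable symmetry of the weights) to the bounce-back and bounce-back-with-sign-flip boundary conditions mentioned in the remark following the theorem, where no Fourier diagonalization is available, whereas von Neumann analysis is tied to periodicity.

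One piece of bookkeeping you silently, and correctly, repaired: the paper literally defines $\|\vec{f}(t,\cdot)\|_{\mat{P}}=\sum_{\vec{x}\in\mathcal{L}}\|\vec{f}(t,\vec{x})\|_{\mat{P}}$, a sum of local norms. What you prove --- and what \cite{JunkYong2009} asserts --- is monotonicity of the sum of \emph{squared} local norms. The literal $\ell^1$-of-norms quantity is not even preserved by pure streaming (take $\mat{J}=\zero$, which trivially admits a stability structure, two lattice sites carrying $(0,0)$ and $(1,1)$, and one moving plus one resting velocity: the sum of local norms grows from $\sqrt{2}$ to $2$), so the paper's display must be read as a typo for $\big(\sum_{\vec{x}\in\mathcal{L}}\|\vec{f}(t,\vec{x})\|^2_{\mat{P}}\big)^{1/2}$; your interpretation is the intended one.
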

\begin{remark}{}
Note that the same result holds true for bounce-back and bounce-back with flipping of sign boundary conditions (cf. Junk and Yong \cite{JunkYong2009} and Rheinländer \cite{Rheinlaender2010}). 
\end{remark}
The algorithmic construction of stable collision operators is based on the following Theorem \ref{th: symmetry condition} derived from Theorem \ref{th: stab struct} (see Rheinländer \cite{Rheinlaender2010} and Yong \cite{Yong2009862}).
\begin{theorem}{Rheinländer \cite{Rheinlaender2010}}
\label{th: symmetry condition}
Collision matrix $\mat{J} \in \R^{n \times n}$ admits a pre-stability structure in the sense of Definition \ref{def: stab struct} if and only if there exists a diagonal and positive definite matrix $\mat{\Lambda} \in \R^{n\times n}$ such that:
\begin{align}
	\label{eq: contion existence pre-stability structure}
	\mat{J}\ \mat{\Lambda} = \mat{\Lambda}\ \mat{J}^T.
\end{align}
\end{theorem}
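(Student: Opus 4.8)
The plan is to prove both implications by relating the pre-stability structure to a diagonal symmetrization of $\mat{J}$. The whole argument hinges on one observation: the symmetry condition (\ref{eq: contion existence pre-stability structure}) with diagonal positive definite $\mat{\Lambda}$ is exactly the statement that $\mat{J}$ is \emph{similar to a symmetric matrix via a diagonal congruence}, and the spectral theorem then supplies the orthogonal change of basis that the pre-stability structure needs.

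For the ``only if'' direction I would start from a given pre-stability structure $(\mat{P},\vec{a},\vec{s})$ and simply exhibit the required matrix. The natural candidate is $\mat{\Lambda} = (\mat{P}^T\mat{P})^{-1}$. By condition 2 of Definition \ref{def: stab struct} the product $\mat{P}^T\mat{P} = \diag(a_1,\dots,a_n)$ is diagonal, and since $\mat{P}$ is invertible it is positive definite, so every $a_k > 0$ and $\mat{\Lambda} = \diag(1/a_1,\dots,1/a_n)$ is diagonal and positive definite as required. To verify (\ref{eq: contion existence pre-stability structure}) I would rewrite condition 1 as $\mat{J} = \mat{P}^{-1}\mat{D}\mat{P}$ with $\mat{D} = -\diag(s_1,\dots,s_n)$, note $\mat{\Lambda} = \mat{P}^{-1}(\mat{P}^{-1})^T$, and compute $\mat{J}\mat{\Lambda} = \mat{P}^{-1}\mat{D}(\mat{P}^{-1})^T$ and $\mat{\Lambda}\mat{J}^T = \mat{P}^{-1}(\mat{P}^{-1})^T\mat{P}^T\mat{D}(\mat{P}^{-1})^T$; using $\mat{D}^T = \mat{D}$ together with $(\mat{P}^{-1})^T\mat{P}^T = \Id$ the two expressions collapse to the same matrix, which gives the claim.

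For the ``if'' direction I would introduce $\mat{\Lambda}^{1/2} = \diag(\sqrt{\lambda_1},\dots,\sqrt{\lambda_n})$ and the congruent matrix $\mat{M} = \mat{\Lambda}^{-1/2}\mat{J}\mat{\Lambda}^{1/2}$. A short manipulation (multiplying $\mat{M} = \mat{M}^T$ by $\mat{\Lambda}^{1/2}$ on both sides) shows that symmetry of $\mat{M}$ is equivalent to (\ref{eq: contion existence pre-stability structure}), so the hypothesis makes $\mat{M}$ symmetric. The spectral theorem then yields an orthogonal $\mat{Q}$ with $\mat{Q}^T\mat{M}\mat{Q} = \mat{D}$ diagonal and real. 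Setting $\mat{P} = \mat{Q}^T\mat{\Lambda}^{-1/2}$, which is invertible as a product of invertible factors, I would rearrange $\mat{Q}^T\mat{\Lambda}^{-1/2}\mat{J}\mat{\Lambda}^{1/2}\mat{Q} = \mat{D}$ into $\mat{P}\mat{J} = \mat{D}\mat{P}$, recovering condition 1 with $\vec{s}$ read off as the negatives of the diagonal entries of $\mat{D}$, and compute $\mat{P}^T\mat{P} = \mat{\Lambda}^{-1/2}\mat{Q}\mat{Q}^T\mat{\Lambda}^{-1/2} = \mat{\Lambda}^{-1}$, giving condition 2 with $\vec{a} = (1/\lambda_1,\dots,1/\lambda_n)^T$.

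Neither direction involves a genuine difficulty, but the step that must be handled with care — and the real content of the reverse implication — is the use of \emph{orthogonal} diagonalization rather than diagonalization of $\mat{J}$ by an arbitrary eigenvector basis. It is precisely the orthogonality $\mat{Q}\mat{Q}^T = \Id$ that forces $\mat{P}^T\mat{P}$ to reduce to the diagonal matrix $\mat{\Lambda}^{-1}$; a general similarity transformation would diagonalize $\mat{J}$ but would not make $\mat{P}^T\mat{P}$ diagonal, so the prior symmetrization by $\mat{\Lambda}^{\pm 1/2}$ is essential rather than cosmetic. I would therefore make sure the argument flags that the symmetry of $\mat{M}$ (and hence the existence of real spectrum and an orthonormal eigenbasis) is exactly what the hypothesis buys us.
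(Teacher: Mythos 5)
Your proof is correct: both directions check out, and the key step --- symmetrizing $\mat{J}$ via the diagonal congruence $\mat{M} = \mat{\Lambda}^{-1/2}\,\mat{J}\,\mat{\Lambda}^{1/2}$ and then invoking the spectral theorem to get an orthogonal diagonalizer, so that $\mat{P} = \mat{Q}^T\mat{\Lambda}^{-1/2}$ satisfies both $\mat{P}\,\mat{J} = \mat{D}\,\mat{P}$ and $\mat{P}^T\mat{P} = \mat{\Lambda}^{-1}$ --- is exactly the standard argument behind this equivalence. Note that the paper itself gives no proof of this theorem (it is quoted from Rheinländer and Yong), so there is nothing to diverge from; your argument is the self-contained version of the cited result, including the correct observation that orthogonality of $\mat{Q}$, not mere diagonalizability of $\mat{J}$, is what makes $\mat{P}^T\mat{P}$ diagonal.
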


\section{Classical Linear Collision Operators}
\label{sec: BGK}

Assume a linear BGK operator that relaxes all moments to the according equilibrium moments with the same relaxation time.
Assume a regular matrix $\mat{M} \in \R^{n \times n}$ which maps the \pdfs\ onto the $n$-dimensional moment space.
We denote matrix $\mat{M}$ as \enquote{moment matrix}.
Mapping the equilibrium distribution function $\vec{f}^{eq}[\vec{f}]$ onto moment space, one obtains the equilibrium moment vector $\vec{m}^{eq}(\vec{f}) = \mat{M}\ \vec{f}^{eq}$.
Due to the linearity of the map $\vec{f} \mapsto \vec{f}^{eq}[\vec{f}]$, the mapping from moments onto the equilibrium moment vector $\vec{m}^{eq}[\vec{f}]$ is linear as well and can be written using a suitable matrix $\mat{E} \in \R^{n \times n}$:
\begin{align*}
	\vec{m}^{eq}[\vec{f}] = \mat{E}\ \vec{m} = \mat{E}\ \mat{M}\ \vec{f}.
\end{align*}
Note that while matrix $\mat{E}_{pdf}$, introduced above, maps the particle densities onto the equilibrium densities, the matrix $\mat{E}$ maps moments onto the according equilibrium moments.
Hence, the equilibrium distribution $\vec{f}^{eq}[\vec{f}]$ can be computed as:
\begin{align*}
	\vec{f}^{eq}[\vec{f}] = \underbrace{\mat{M}^{-1}\ \mat{E}\ \mat{M}}_{=: \mat{H}}\ \vec{f}.
\end{align*}
\medskip
\\
For the consistency analysis of the LBM, additional moments to the conserved moments of the equilibrium distribution $\vec{f}^{eq}[\vec{f}]$ are required.
Usually, these moments are of the form $\sum\limits_{i=1}^N \prod\limits_{j=1}^D c_{ij}^{k_j} f_i$ with exponents $k_j$ for $j=1..D$.
Assume, a LBM has $\gamma$ conserved moments and prescribes $\beta$ moments of the equilibrium distribution.
Let $\mat{M}^{(1)}$ denote a regular matrix with the following properties:
\begin{enumerate}
	\item the first $\gamma$ rows map the particle distribution vector $\vec{f}$ onto the conserved moments;
	\item the following $\beta$ rows map the particle distribution vector $\vec{f}$ onto the moments for which equilibrium moments need to be prescribed for consistency; and
	\item the remaining $N-\beta-\gamma$ rows are chosen such that $\mat{M}$ is regular.
\end{enumerate}
Using these assumptions let  the collision operator $\mat{J}^{(1)}$ be defined as:
\begin{align}
\mat{J}^{(1)} &= \frac{1}{\tau} \left( \mat{H} - \Id_n \right)\ \vec{f}\nonumber\\
&= \frac{1}{\tau} \mat{M}^{(1)-1}\ \left( \mat{E}^{(1)} - \Id_n \right)\ \mat{M}^{(1)}. \label{eq: collision linear}
\end{align}
The matrix $\mat{J}^{(1)}$ admits a pre-stability structure if and only if a diagonal positive definite matrix $\mat{\Lambda}^{(1)}$ exists such that:
\begin{align}
\mat{J}^{(1)}\ \mat{\Lambda}^{(1)} &\overset{!}{=} \mat{\Lambda}^{(1)}\ \mat{J}^{(1)T}.\label{eq: unmodified symmetry condition}
\end{align}
Plugging in (\ref{eq: collision linear}) into (\ref{eq: unmodified symmetry condition}), one obtains the condition:
\begin{align*}
\left( \mat{M}^{(1)-1}\ \mat{E}^{(1)}\ \mat{M}^{(1)} - \Id_n \right)\ \mat{\Lambda}^{(1)} &\overset{!}{=} \mat{\Lambda}^{(1)} \left( \mat{M}^{(1)T}\ \mat{E}^{(1)T}\ \mat{M}^{(1)-T} - \Id_n \right),
\end{align*}
which reduces to:
\begin{align}
\mat{M}^{(1)-1}\ \mat{E}^{(1)}\ \mat{M}^{(1)}\ \mat{\Lambda}^{(1)} &\overset{!}{=} \mat{\Lambda}^{(1)}\ \mat{M}^{(1)T}\ \mat{E}^{(1)T}\ \mat{M}^{(1)-T}. \label{eq: stab struct 1}
\end{align}
Since the moment matrix $\mat{M}^{(1)}$ is assumed regular, equation (\ref{eq: stab struct 1}) can be replaced by the equivalent condition:
\begin{align}
\mat{E}^{(1)}\ \mat{M}^{(1)}\ \mat{\Lambda}^{(1)}\ \mat{M}^{(1)T} &\overset{!}{=} \mat{M}^{(1)}\ \mat{\Lambda}^{(1)}\ \mat{M}^{(1)T}\ \mat{E}^{(1)T}.\label{eq: stab struct 2}
\end{align}
Introducing the short-cut notation $\tilde{\mat{\Lambda}}^{(1)} = \mat{M}^{(1)}\ \mat{\Lambda}^{(1)}\ \mat{M}^{(1)T}$, condition (\ref{eq: stab struct 2}) reads:
\begin{align}
\mat{E}^{(1)}\ \tilde{\mat{\Lambda}}^{(1)} &\overset{!}{=} \tilde{\mat{\Lambda}}^{(1)}\ \mat{E}^{(1)T}. \label{eq: condition general}
\end{align}
Therefore, condition (\ref{eq: unmodified symmetry condition}) on the collision operator $\mat{J}^{(1)}$ has been recast as a condition on the  equilibrium matrix $\mat{E}^{(1)}$ which needs to be symmetrized by the matrix:
\begin{align}
\label{eq: def lambda(1)}
\tilde{\Lambda}^{(1)} = \Big( < \vec{r}^{(1)}_i, \vec{r}^{(1)}_j >_{\mat{\Lambda}^{(1)}} \Big)_{i,j=1,\dots,n},
\end{align}
where the vector $\vec{r}^{(1)}_i$ denotes the $i$-th row of $\mat{M}^{(1)}$ and the scalar product $<\cdot,\cdot>_{\mat{\Lambda}^{(1)}}$ is defined using matrix $\mat{\Lambda}^{(1)}$:
\begin{align}
\label{eq: scalar product}
<\vec{\xi},\vec{\zeta}>_{\mat{\Lambda}^{(1)}} &= \vec{\xi}\ \mat{\Lambda}^{(1)}\ \vec{\zeta}^T.
\end{align}
Since we assume that all equilibrium moments are linear combinations of the conserved moments, the equilibrium matrix $\mat{E}$ is of the form:
\begin{align*}
\mat{E}^{(1)} &= \begin{pmatrix}
\Id_\gamma & \zero_{\gamma \times \beta} & \zero_{\gamma \times (n-\beta-\gamma)}\\
\mat{E}^{(1)}_{21} & \zero_{\beta \times \beta} & \zero_{\beta \times (n-\beta-\gamma)}\\
\mat{E}^{(1)}_{31} & \zero_{(n-\beta-\gamma) \times \beta} & \zero_{(n-\beta-\gamma) \times (n-\beta-\gamma)}
\end{pmatrix},
\end{align*}
where $\mat{E}^{(1)}_{21}$ and $\mat{E}^{(1)}_{31}$ map the conserved moments onto the $\beta$ equilibrium moments needed for consistency analysis and the remaining $n-\beta-\gamma$ equilibrium moments not needed for consistency analysis, respectively.
\medskip
\\
For the further analysis, we write matrix $\tilde{\mat{\Lambda}}^{(1)}$ as block matrix:
\begin{align}
\label{eq: block structure lambda1}
\tilde{\Lambda}^{(1)} &= \begin{pmatrix}
\tilde{\Lambda}^{(1)}_{11} & \tilde{\Lambda}^{(1)}_{12} & \tilde{\Lambda}^{(1)}_{13}\\
\tilde{\Lambda}^{(1)}_{21} & \tilde{\Lambda}^{(1)}_{22} & \tilde{\Lambda}^{(1)}_{23}\\
\tilde{\Lambda}^{(1)}_{31} & \tilde{\Lambda}^{(1)}_{32} & \tilde{\Lambda}^{(1)}_{33}
\end{pmatrix},
\end{align}
with $\tilde{\mat{\Lambda}}^{(1)}_{ij} \in \R^{d_i \times d_j}$ with $d_1 = \gamma$, $d_2 = \beta$, $d_3 = n-\beta-\gamma$.
In matrix $\mat{\Lambda}^{(1)}$, each block $\tilde{\mat{\Lambda}}^{(1)}_{ij}$ contains information on the $\mat{\Lambda}$-scalar products of the rows for either the conserved moments, momenst necessary for consistency, or remaining moments with the rows for either the conserved moments, momenst necessary for consistency, or remaining moments.
With this, condition (\ref{eq: condition general}) reads:
\begin{align}
\label{eq: condition general plugged in}
\begin{pmatrix}
\tilde{\mat{\Lambda}}^{(1)}_{11} & \tilde{\mat{\Lambda}}^{(1)}_{12} & \tilde{\mat{\Lambda}}^{(1)}_{13}\\ 
\mat{E}^{(1)}_{21}\ \tilde{\mat{\Lambda}}^{(1)}_{11} & \mat{E}^{(1)}_{21}\ \tilde{\mat{\Lambda}}^{(1)}_{12} & \mat{E}^{(1)}_{21}\ \tilde{\mat{\Lambda}}^{(1)}_{13}\\
\mat{E}^{(1)}_{31}\ \tilde{\mat{\Lambda}}^{(1)}_{11} & \mat{E}^{(1)}_{31}\ \tilde{\mat{\Lambda}}^{(1)}_{12} & \mat{E}^{(1)}_{31}\ \tilde{\mat{\Lambda}}^{(1)}_{13}
\end{pmatrix} \overset{!}{=}
\begin{pmatrix}
\tilde{\mat{\Lambda}}^{(1)}_{11} & \tilde{\mat{\Lambda}}^{(1)}_{11} \mat{E}^{(1)T}_{21} & \tilde{\mat{\Lambda}}^{(1)}_{11} \mat{E}^{(1)T}_{31}\\
\tilde{\mat{\Lambda}}^{(1)}_{12} & \tilde{\mat{\Lambda}}^{(1)}_{12} \mat{E}^{(1)T}_{21} & \tilde{\mat{\Lambda}}^{(1)}_{12} \mat{E}^{(1)T}_{31}\\
\tilde{\mat{\Lambda}}^{(1)}_{13} & \tilde{\mat{\Lambda}}^{(1)}_{13} \mat{E}^{(1)T}_{21} & \tilde{\mat{\Lambda}}^{(1)}_{13} \mat{E}^{(1)T}_{31}
\end{pmatrix}.
\end{align}
\begin{remark}[]
Constructing a combination of $\mat{E}^{(1)}$, $\mat{M}^{(1)}$, and $\mat{\Lambda}^{(1)}$ satisfying condition (\ref{eq: condition general plugged in}) is, in general, rather complicated.
Thus, we require a structured approach towards the construction of these matrices.
\end{remark}

\section{Relative Schemes}
\label{sec: relative}

From the general structure of condition (\ref{eq: condition general plugged in}) and the inherent symmetry of $\tilde{\mat{\Lambda}}_{11}^{(1)}$, one can easily see that four lower-dimensional matrices are of importance: $\tilde{\mat{\Lambda}}^{(1)}_{12}$, $\tilde{\mat{\Lambda}}^{(1)}_{13}$, $\mat{E}^{(1)}_{21}$, and $\mat{E}^{(1)}_{31}$.
We now simplify condition (\ref{eq: condition general plugged in}) by eliminating all terms depending on matrices $\mat{E}^{(1)}_{21}$ and $\mat{E}^{(1)}_{31}$.
We achieve this by introducing a modified moment matrix $\mat{M}^{(2)}$.
Since all non-conserved equilibrium moments are linear combinations of the conserved moments, it is possible to construct new moment matrix $\mat{M}^{(2)}$ with rows $\vec{r}^{(2)}_i$ for $i=1+\gamma,..,n$ chosen in such a way that all according non-conserved equilibrium moments vanish:
\begin{align}
\label{eq: fully relative moment matrix}
\mat{M}^{(2)} = \begin{pmatrix}
\Id_\gamma & \zero_{\gamma \times \beta} & \zero_{\gamma \times (n-\beta-\gamma)}\\
-\mat{E}^{(1)}_{21} & \Id_\beta & \zero_{\beta \times (n-\beta-\gamma)}\\
-\mat{E}^{(1)}_{31} & \zero_{(n-\beta-\gamma) \times \beta} & \Id_{(n-\beta-\gamma)}
\end{pmatrix}\ \mat{M}^{(1)}.
\end{align}
The corresponding matrix mapping the moments onto the equilibrium moments $\mat{E}^{(2)}$ is given by:
\begin{align}
\label{eq: fully relative equilibrium matrix}
\mat{E}^{(2)} = \begin{pmatrix}
\Id_\gamma & \zero_{\gamma \times \beta} & \zero_{\gamma \times (n-\beta-\gamma)}\\
\zero_{\beta \times \gamma} & \zero_{\beta \times \beta} & \zero_{\beta \times (n-\beta-\gamma)}\\
\zero_{(n-\beta-\gamma) \times \gamma} & \zero_{(n-\beta-\gamma) \times \beta} & \zero_{(n-\beta-\gamma) \times (n-\beta-\gamma)}
\end{pmatrix}.
\end{align}
With this, analog of condition (\ref{eq: condition general plugged in}) for $\mat{M}^{(2)}$, $\mat{E}^{(2)}$, and $\mat{\Lambda}^{(2)}$ reads:
\begin{align}
\label{eq: condition general plugged in fully relative}
\begin{pmatrix}
\tilde{\mat{\Lambda}}^{(2)}_{11} & \tilde{\mat{\Lambda}}^{(2)}_{12} & \tilde{\mat{\Lambda}}^{(2)}_{13}\\ 
\zero_{\beta \times \gamma} & \zero_{\beta \times \beta} & \zero_{\beta \times (n-\beta-\gamma)}\\
\zero_{(n-\beta-\gamma) \times \gamma} & \zero_{(n-\beta-\gamma) \times \beta} & \zero_{(n-\beta-\gamma) \times (n-\beta-\gamma)}
\end{pmatrix} \overset{!}{=}
\begin{pmatrix}
\tilde{\mat{\Lambda}}^{(1)}_{11} & \zero_{\gamma\times \beta} & \zero_{\gamma \times (n-\beta-\gamma)}\\
\tilde{\mat{\Lambda}}^{(1)}_{12} & \zero_{\beta \times \beta} & \zero_{\beta \times (n-\beta-\gamma)}\\
\tilde{\mat{\Lambda}}^{(1)}_{13} & \zero_{(n-\beta-\gamma)\times \beta} & \zero_{(n-\beta-\gamma) \times (n-\beta-\gamma)}
\end{pmatrix},
\end{align}
where $\tilde{\mat{\Lambda}}^{(2)}_{11}$ for $j=1,..,3$ is defined by:
\begin{align*}
\tilde{\mat{\Lambda}}^{(2)}_{1j} &= \sum\limits_{k=1}^3 \mat{M}^{(2)}_{1k}\ \mat{\Lambda}^{(2)}\ \mat{M}^{(2)T}_{jk}.
\end{align*}
This definition directly corresponds to definition (\ref{eq: def lambda(1)}) and block structure (\ref{eq: block structure lambda1}).
Hence, we find that for in order to satisfy condition (\ref{eq: condition general plugged in fully relative}) the entries of $\tilde{\mat{\Lambda}}^{(2)}_{12}$ and $\tilde{\mat{\Lambda}}^{(2)}_{13}$ need to vanish.
This is equivalent to the following orthogonality condition of the rows of $\mat{M}^{(2)}$ w.r.t. scalar product (\ref{eq: scalar product}) for $\mat{\Lambda}^{(2)}$:
\begin{align}
\label{eq: cond scalar product fully relative}
< \vec{r}^{(2)}_i, \vec{r}^{(2)}_j>_{\mat{\Lambda}^{(2)}} &\overset{!}{=} 0\ \text{for}\ i=1,..,\gamma,\ j=1+\gamma,..,n.
\end{align}
Condition (\ref{eq: cond scalar product fully relative}) can also be written as condition on the spaces spanned by the rows corresponding to conserved moments, moments with prescribed equilibrium moments, and the remaining moments:
\begin{align}
\Span\left\{ \vec{r}_i^{(2)}: i = 1+\beta,\dots,\beta+\gamma \right\} &\overset{!}{\subset} \Span\left\{ \vec{r}_i^{(2)}: i = 1,\dots,\gamma \right\}^{\perp_{\mat{\Lambda}^{(2)}}},\label{eq: span prescribed}\\
\Span\left\{ \vec{r}_i^{(2)}: i = 1+\beta+\gamma,\dots,n \right\} &\overset{!}{\subset} \Span\left\{ \vec{r}_i^{(2)}: i = 1,\dots,\gamma \right\}^{\perp_{\mat{\Lambda}^{(2)}}},\label{eq: span remaining}
\end{align}
where the superscript $\perp_{_{\mat{\Lambda}^{(2)}}}$ denotes the orthogonal complement of the set w.r.t. the scalar product $<\cdot,\cdot>_{\mat{\Lambda}^{(2)}}$. 
Theorem \ref{th: Fully Relative} summarizes these results:
\begin{theorem}[]
\label{th: Fully Relative}
Assume a linear LBM BGK scheme 
\begin{align*}
	f_i(t+\Delta t,\vec{x} + \Delta t \vec{c}_i) = f_i(t,\vec{x}) + \frac{1}{\tau} \vec{e}_i^T\ \left( \mat{E}^{(1)} - \Id \right)\ \vec{f}(t,\vec{x}),
\end{align*}
with moment matrix $\mat{M}^{(1)}$ and equilibrium matrix $\mat{E}^{(1)}$ which is consistent to $\alpha$-th order.
In addition, assume the relative moment matrix $\mat{M}^{(2)}$ defined according to (\ref{eq: fully relative moment matrix}), the equilibrium matrix $\mat{E}^{(2)}$ defined according to (\ref{eq: fully relative equilibrium matrix}), and a diagonal and positive definite matrix $\mat{\Lambda}^{(2)}$ defined analog to (\ref{eq: block structure lambda1}) fulfilling condition (\ref{eq: cond scalar product fully relative}).
Then, the collision matrix 
\begin{align*}
\mat{J}^{(2)} = \frac{1}{\tau} \mat{M}^{(2)-1}\ \left( \mat{E}^{(2)} - \Id_n \right)\ \mat{M}^{(2)}
\end{align*}
admits a pre-stability structure.
If in addition, $\tau \in \Big[ \frac{1}{2}, \infty \Big)$, then the collision matrix $\mat{J}^{(2)}$ admits a stability structure.
\end{theorem}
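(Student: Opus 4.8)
The plan is to reuse the reduction carried out in Section~\ref{sec: relative} for the pre-stability claim and to add a short spectral computation for the eigenvalue bound, treating the two assertions separately.

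For the pre-stability structure I would argue as follows. By Theorem~\ref{th: symmetry condition}, the matrix $\mat{J}^{(2)}$ admits a pre-stability structure if and only if the symmetry condition $\mat{J}^{(2)}\ \mat{\Lambda}^{(2)} = \mat{\Lambda}^{(2)}\ \mat{J}^{(2)T}$ holds for some diagonal and positive definite $\mat{\Lambda}^{(2)}$. The matrix $\mat{M}^{(2)}$ from (\ref{eq: fully relative moment matrix}) is the product of the regular $\mat{M}^{(1)}$ with a block lower-triangular matrix having identity diagonal blocks, hence it is regular; consequently the algebraic steps leading from (\ref{eq: unmodified symmetry condition}) to (\ref{eq: condition general}) can be repeated verbatim with superscript $(2)$, turning the symmetry condition into its equivalent form (\ref{eq: condition general plugged in fully relative}). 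Since $\mat{E}^{(2)}$ from (\ref{eq: fully relative equilibrium matrix}) equals $\Id_\gamma$ on the conserved block and vanishes elsewhere, a block-by-block comparison in (\ref{eq: condition general plugged in fully relative}) shows that the only non-trivial requirements are that $\tilde{\mat{\Lambda}}^{(2)}_{12}$ and $\tilde{\mat{\Lambda}}^{(2)}_{13}$ vanish; by the definition of the blocks of $\tilde{\mat{\Lambda}}^{(2)}$ this is exactly the orthogonality condition (\ref{eq: cond scalar product fully relative}). As $\mat{\Lambda}^{(2)}$ is assumed to satisfy (\ref{eq: cond scalar product fully relative}), the symmetry condition holds and Theorem~\ref{th: symmetry condition} supplies the pre-stability structure $(\mat{P},\vec{a},\vec{s})$.

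For the stability structure I would then verify the bound $s_k \in [0,2]$ of Definition~\ref{def: stab struct}. The first defining relation $\mat{P}\ \mat{J}^{(2)} = -\diag(s_1,\dots,s_n)\ \mat{P}$ with invertible $\mat{P}$ shows that $\mat{J}^{(2)}$ is similar to $-\diag(s_1,\dots,s_n)$, so the numbers $-s_k$ are precisely the eigenvalues of $\mat{J}^{(2)}$. Conjugation by $\mat{M}^{(2)}$ makes $\mat{J}^{(2)}$ similar to $\tfrac{1}{\tau}(\mat{E}^{(2)} - \Id_n)$, and since $\mat{E}^{(2)}$ is a projector with eigenvalue $1$ of multiplicity $\gamma$ and eigenvalue $0$ of multiplicity $n-\gamma$, the eigenvalues of $\mat{J}^{(2)}$ are $0$ and $-\tfrac{1}{\tau}$. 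Hence $s_k \in \{0,\tfrac{1}{\tau}\}$, and the requirement $s_k \in [0,2]$ reduces to $\tfrac{1}{\tau} \leq 2$, i.e. $\tau \geq \tfrac{1}{2}$, which is the stated range.

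The bulk of the pre-stability part is bookkeeping already done in Section~\ref{sec: relative}, so the genuinely new content is the spectral count; the step requiring the most care is the identification of the abstract relaxation numbers $s_k$ of Definition~\ref{def: stab struct} with the spectrum of $\mat{J}^{(2)}$ through the similarity encoded in the first defining relation, together with the fact that the $\mat{\Lambda}^{(2)}$-symmetry makes these eigenvalues real. As a sanity check on the consistency hypothesis I would also note that, writing $\mat{M}^{(2)} = \mat{T}\,\mat{M}^{(1)}$ with $\mat{T}$ the block lower-triangular prefactor in (\ref{eq: fully relative moment matrix}), one has $\mat{T}^{-1}\mat{E}^{(2)}\mat{T} = \mat{E}^{(1)}$, so that $\mat{J}^{(2)} = \mat{J}^{(1)}$; the relative reformulation therefore leaves the scheme, and hence its $\alpha$-th order consistency, unchanged while only simplifying the verification of the stability structure.
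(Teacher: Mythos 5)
Your proposal is correct and follows essentially the same route as the paper: the orthogonality condition (\ref{eq: cond scalar product fully relative}) forces $\tilde{\mat{\Lambda}}^{(2)}_{12}$ and $\tilde{\mat{\Lambda}}^{(2)}_{13}$ to vanish, which yields the moment-space symmetry condition and hence, via Theorem \ref{th: symmetry condition}, the pre-stability structure, while the stability range $\tau \geq \frac{1}{2}$ follows from the projector structure of $\mat{E}^{(2)}$ giving $\sigma\left(\mat{J}^{(2)}\right) \subset \{0,-\frac{1}{\tau}\}$. Your closing observation that $\mat{T}^{-1}\,\mat{E}^{(2)}\,\mat{T} = \mat{E}^{(1)}$ and hence $\mat{J}^{(2)} = \mat{J}^{(1)}$ is a correct and worthwhile addition not made explicit in the paper, as it shows the relative reformulation is a pure change of moment basis and immediately settles the consistency hypothesis.
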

\begin{proof}
With $\mat{M}^{(2)}$ and $\mat{\Lambda}^{(2)}$ fulfilling condition (\ref{eq: cond scalar product fully relative}), $\tilde{\mat{\Lambda}}^{(2)}_{12} = \zero_{\gamma \times \beta}$ and $\tilde{\mat{\Lambda}}^{(2)}_{13} = \zero_{\gamma \times (n-\beta-\gamma)}$ follow directly.
With $\mat{E}^{(2)}$ fulfilling condition (\ref{eq: fully relative equilibrium matrix}), we find:
\begin{align*}
\mat{E}^{(2)}\ \tilde{\mat{\Lambda}}^{(2)} &= \tilde{\mat{\Lambda}}^{(2)}\ \mat{E}^{(2)}
\end{align*}
After multiplication with $\mat{M}^{(2)-1}$ from left and $\mat{M}^{(2)-T}$ from right, one finds that
\begin{align*}
	\mat{\Lambda}^{(2)} = \mat{M}^{(2)-1}\ \mat{\Lambda}^{(2)}\ \mat{M}^{(2)-T}
\end{align*}
symmetrizes the collision matrix $\mat{J}^{(2)}$:
\begin{align*}
\mat{J}^{(2)}\ \mat{\Lambda}^{(2)} &= \mat{\Lambda}^{(2)}\ \mat{J}^{(2)T}.
\end{align*}
Hence, $\mat{J}^{(2)}$ admits a pre-stability structure.
\medskip
\\
For proof of existence of a stability structure for $\tau \in \Big[ \frac{1}{2}, \infty \Big)$, the structure of the collision operator is used.
First, we observe that $\mat{M}^{(2)-1}\ \mat{E}^{(2)}\ \mat{M}^{(2)}$ is a projection matrix.
Hence, the eigenvalues of collision operator $\mat{J}^{(2)}$ are $-\frac{1}{\tau}$ and $0$.
With $\tau \geq \frac{1}{2}$, we find: $\sigma\left( \mat{J}^{(2)} \right) \subset [-2,0]$.
\end{proof}
\begin{remark}[]
\label{re: Relative Schemes}
Theorem \ref{th: Fully Relative} presents a stability result for a class of BGK-type collision operators which can be rewritten in a form, such that the matrix mapping conserved moments onto the equilibrium moments is of the form defined in (\ref{eq: fully relative equilibrium matrix}).
In this work, such schemes are called fully relative schemes where the term relative is meant to resemble the term relative velocity scheme used by Dubois \etal \cite{RelativeVelocity}.
The scheme is said to be relative since it uses relative moments ensuring condition (\ref{eq: fully relative equilibrium matrix}).
It is important, that condition (\ref{eq: cond scalar product fully relative}) is still a strong condition.
\end{remark}

\section{Partially Relative Schemes}
\label{sec: Partially}

As mentioned in Remark \ref{re: Relative Schemes}, condition (\ref{eq: cond scalar product fully relative}) is a strong condition not providing a direct approach towards construction collision operators admitting a stability structure.
In this section, an structured approach to satisfy condition (\ref{eq: span remaining}) assuming condition (\ref{eq: span prescribed}) is fulfilled.
Here, we use that the order of consistency of the scheme is determined by the correctness of the first $\beta+\gamma$ equilibrium moments only.
The remaining equilibrium moments only influence the error term itself but not the order of the error term.
Hence, one can choose the remaining $n-\beta-\gamma$ equilibrium moments arbitrary improving stability without reducing order of consistency.
Again, assume the matrix $\mat{E}^{(3)}$ mapping conserved moments onto equilibrium moments to be of the form:
\begin{align}
\label{eq: relative equilibrium matrix}
\mat{E}^{(3)} = \begin{pmatrix}
\Id_\gamma & \zero_{\gamma \times \beta} & \zero_{\gamma \times (n-\beta-\gamma)}\\
\zero_{\beta \times \gamma} & \zero_{\beta \times \beta} & \zero_{\beta \times (n-\beta-\gamma)}\\
\zero_{(n-\beta-\gamma) \times \gamma} & \zero_{(n-\beta-\gamma) \times \beta} & \zero_{(n-\beta-\gamma) \times (n-\beta-\gamma)}
\end{pmatrix}.
\end{align}
In addition, assume a moment matrix $\mat{M}^{(3)}$ with:
\begin{align}
\label{eq: relative moment matrix}
\mat{M}^{(3)} = \begin{pmatrix}
\Id_\gamma & \zero_{\gamma \times \beta} & \zero_{\gamma \times (n-\beta-\gamma)}\\
-\mat{E}^{(1)}_{21} & \Id_\beta & \zero_{\beta \times (n-\beta-\gamma)}\\
\zero_{(n-\beta-\gamma) \times \gamma} & \zero_{(n-\beta-\gamma) \times \beta} & \Id_{(n-\beta-\gamma)}
\end{pmatrix}\ \mat{M}^{(1)}.
\end{align}
Here, only the rows generating the $\beta$ additional moments necessary for conistency are altered w.r.t. the original BGK moment matrix $\mat{M}^{(1)}$.
Since the shape of only the first $\beta+\gamma$ rows of $\mat{M}^{(3)}$ is relevant for the order of consistency, one can now alter the remaining $n-\beta-\gamma$ rows of $\mat{M}^{(3)}$ freely as long as matrix $\mat{M}^{(3)}$ remains regular.
Using this, Theorem \ref{th: partially relative schemes} can be formulated.
\begin{theorem}[Partially Relative Schemes]
\label{th: partially relative schemes}
Assume a linear LBM BGK scheme 
\begin{align*}
	f_i(t+\Delta t,\vec{x} + \Delta t \vec{c}_i) = f_i(t,\vec{x}) + \frac{1}{\tau} \vec{e}_i^T\ \left( \mat{E}^{(1)} - \Id \right)\ \vec{f}(t,\vec{x}),
\end{align*}
with moment matrix $\mat{M}^{(1)}$ and equilibrium matrix $\mat{E}^{(1)}$ which is consistent to $\alpha$-th order.
In addition, assume the equilibrium matrix $\mat{E}^{(3)}$ defined as (\ref{eq: relative equilibrium matrix}), the moment matrix $\mat{M}^{(3)}$ defined as (\ref{eq: relative moment matrix}), and a diagonal and positive definite matrix $\mat{\Lambda}^{(3)}$ fulfilling the following equivalent conditions:
\begin{align}
<\vec{r}^{(3)}_i,\vec{r}^{(3)}_j>_{\mat{\Lambda}^{(3)}} &= 0,\ \text{for}\ i=1,..,\gamma,\ j=1+\gamma,..,\beta+\gamma,\label{eq: orthogonality constraint partially}\\
\Span\left\{ \vec{r}_i^{(3)}: i = 1+\beta,\dots,\beta+\gamma \right\} &\subset \Span\left\{ \vec{r}_i^{(3)}: i = 1,\dots,\gamma \right\}^{\perp_{\mat{\Lambda}^{(3)}}},\label{eq: orthogonality constraint partially span}
\end{align}
with $\vec{r}^{(3)}_i$ denoting the $i$-th row of $\mat{M}^{(3)}$, the scalar product $<\cdot,\cdot>_{\mat{\Lambda}^{(3)}}$ defined analog to (\ref{eq: scalar product}), and superscript $\perp_{\mat{\Lambda}^{(3)}}$ denoting the orthogonal complement of the set w.r.t. this scalar product.
Then, the collision matrix 
\begin{align}
\label{eq: orthogonality theorem}
\tilde{\mat{J}}^{(3)} = \frac{1}{\tau} \tilde{\mat{M}}^{(3)-1}\ \left( \mat{E}^{(3)} - \Id_n \right)\ \tilde{\mat{M}}^{(3)}
\end{align}
with the modified moment matrix $\tilde{\mat{M}}^{(3)}$ with rows $\tilde{\vec{r}}^{(3)}_i$:
\begin{align}
\label{eq: modified moment matrix}
\tilde{\vec{r}}^{(3)} = \begin{cases}
\vec{r}^{(3)}_i, & \text{for}\ i=1,..,\beta+\gamma,\\
\vec{r}^{(3)}_i - \sum\limits_{j=1}^\gamma \frac{<\vec{r}^{(3)}_i,\vec{o}^{(3)}_j>_{\mat{\Lambda}^{(3)}}}{<\vec{o}^{(3)}_j,\vec{o}^{(3)}_j>_{\mat{\Lambda}^{(3)}}} \vec{r}^{(3)}_j, & \text{for}\ i=1+\beta+\gamma,..,n,
\end{cases}
\end{align}
and an orthogonal basis $\left\{ \vec{o}^{(3)}_i: i=1,..,\gamma\right\}$ of $\Span\left\{ \vec{r}^{(3)}_i: i=1,..,\gamma\right\}$ admits a pre-stability structure.
The LBM scheme defined by collision matrix $\tilde{\mat{J}}^{(3)}$ is consisten to $\alpha$-th order.
If in addition, $\tau \in \Big[ \frac{1}{2}, \infty \Big)$, then the collision matrix $\mat{J}^{(2)}$ admits a stability structure.
\end{theorem}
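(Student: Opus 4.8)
The plan is to reduce the claim to Theorem~\ref{th: Fully Relative}: I would exhibit the pair $(\tilde{\mat{M}}^{(3)},\mat{E}^{(3)})$, together with $\mat{\Lambda}^{(3)}$, in the role of $(\mat{M}^{(2)},\mat{E}^{(2)},\mat{\Lambda}^{(2)})$ as a genuine fully relative scheme satisfying the orthogonality condition~(\ref{eq: cond scalar product fully relative}). The first observation is that the truncated Gram-Schmidt step~(\ref{eq: modified moment matrix}) leaves the first $\beta+\gamma$ rows of $\mat{M}^{(3)}$ untouched and replaces each remaining row $\vec{r}^{(3)}_i$, $i=1+\beta+\gamma,\dots,n$, by itself minus an element of $\Span\{\vec{r}^{(3)}_1,\dots,\vec{r}^{(3)}_\gamma\}$. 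Hence $\tilde{\mat{M}}^{(3)}=\mat{T}\,\mat{M}^{(3)}$ for a block lower-triangular matrix $\mat{T}$ with identity diagonal blocks that only injects multiples of the conserved rows into the remaining rows. Because $\mat{T}$ is invertible and $\mat{M}^{(3)}$ is regular by assumption, $\tilde{\mat{M}}^{(3)}$ is regular; written relative to $\mat{M}^{(1)}$, it again has the fully relative shape~(\ref{eq: fully relative moment matrix}), now with the bottom-left block $-\mat{E}^{(1)}_{31}$ supplied by the Gram-Schmidt coefficients.

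The central step is to verify condition~(\ref{eq: cond scalar product fully relative}) for the rows of $\tilde{\mat{M}}^{(3)}$. For $i\in\{1,\dots,\gamma\}$ the conserved rows are unchanged, $\tilde{\vec{r}}^{(3)}_i=\vec{r}^{(3)}_i$. For the prescribed rows $j\in\{1+\gamma,\dots,\beta+\gamma\}$, which are also unchanged, the required vanishing $<\tilde{\vec{r}}^{(3)}_i,\tilde{\vec{r}}^{(3)}_j>_{\mat{\Lambda}^{(3)}}=0$ is exactly the hypothesis~(\ref{eq: orthogonality constraint partially}). For the remaining rows $j\in\{1+\beta+\gamma,\dots,n\}$ the defining property of the projection in~(\ref{eq: modified moment matrix}) onto the orthogonal basis $\{\vec{o}^{(3)}_i\}$ of the conserved span gives $<\tilde{\vec{r}}^{(3)}_i,\tilde{\vec{r}}^{(3)}_j>_{\mat{\Lambda}^{(3)}}=0$ for every conserved $i$. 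Here I would invoke that positive definiteness of $\mat{\Lambda}^{(3)}$ makes $<\cdot,\cdot>_{\mat{\Lambda}^{(3)}}$ a genuine inner product, so the orthogonal basis exists with strictly positive norms and the modification is well-defined. With~(\ref{eq: cond scalar product fully relative}) in hand and $\mat{E}^{(3)}$ already in the fully relative form~(\ref{eq: fully relative equilibrium matrix}), Theorem~\ref{th: Fully Relative} produces a pre-stability structure for $\tilde{\mat{J}}^{(3)}$; the idempotence of $\tilde{\mat{M}}^{(3)-1}\,\mat{E}^{(3)}\,\tilde{\mat{M}}^{(3)}$ then forces $\sigma(\tilde{\mat{J}}^{(3)})=\{0,-1/\tau\}\subset[-2,0]$ whenever $\tau\geq\frac{1}{2}$, upgrading this to a stability structure exactly as in that theorem.

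For the consistency claim I would note that replacing $\mat{M}^{(3)}$ by $\tilde{\mat{M}}^{(3)}$ while keeping $\mat{E}^{(3)}$ only alters the remaining equilibrium moments. Concretely, conjugating $\mat{E}^{(3)}$ back into the original moment basis through $\mat{S}=\tilde{\mat{M}}^{(3)}\,\mat{M}^{(1)-1}$ yields an effective equilibrium matrix $\mat{S}^{-1}\mat{E}^{(3)}\mat{S}$ whose first $\beta+\gamma$ rows coincide with those of $\mat{E}^{(1)}$ (the $\Id_\gamma$ and $\mat{E}^{(1)}_{21}$ blocks), the discrepancy being confined to the last $n-\beta-\gamma$ rows. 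Since the conserved and the $\beta$ prescribed equilibrium moments alone fix the order of the scheme --- as recorded before the theorem --- the scheme governed by $\tilde{\mat{J}}^{(3)}$ inherits the $\alpha$-th order consistency of the original scheme.

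The step I expect to demand the most care is the simultaneous bookkeeping that the Gram-Schmidt orthogonalization restores condition~(\ref{eq: span remaining}) for the remaining rows without disturbing the assumed orthogonality~(\ref{eq: orthogonality constraint partially span}) of the prescribed rows, and that this purely remaining-row modification leaves the first $\beta+\gamma$ equilibrium moments intact. Both facts rest on the single structural observation that $\mat{T}$ feeds only conserved-row contributions into the remaining rows; I would make this precise through the block-triangular form of $\mat{T}$ and the explicit conjugation $\mat{S}^{-1}\mat{E}^{(3)}\mat{S}$, which together isolate all changes in the last $n-\beta-\gamma$ coordinates.
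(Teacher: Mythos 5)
Your proposal is correct and follows essentially the same route as the paper's own proof: the hypothesis~(\ref{eq: orthogonality constraint partially}) kills the conserved/prescribed couplings, the truncated Gram--Schmidt step kills the conserved/remaining couplings, the block form of $\mat{E}^{(3)}$ then yields the symmetrization and hence a pre-stability structure, the projection/spectrum argument upgrades it to a stability structure for $\tau \geq \frac{1}{2}$, and consistency follows because the first $\beta+\gamma$ rows are untouched. The only cosmetic difference is that you package this as a formal reduction to Theorem~\ref{th: Fully Relative} (re-identifying the Gram--Schmidt coefficients as a new bottom-left block $-\mat{E}^{(1)}_{31}$ and making the conjugation $\mat{S}^{-1}\mat{E}^{(3)}\mat{S}$ explicit), whereas the paper simply reruns the same symmetrization argument for the modified matrices.
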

\begin{proof}
From orthogonality condition (\ref{eq: orthogonality theorem}) follows:
\begin{align*}
\tilde{\mat{\Lambda}}^{(3)}_{12} = \left( <\tilde{\vec{r}}^{(3)}_i, \tilde{\vec{r}}^{(3)}_j>_{\mat{\Lambda}^{(3)}} \right)_{i=1,..,\gamma, j=1+\gamma,..,\beta+\gamma} = \zero_{\gamma \times \beta}.
\end{align*}
The definition of rows $\tilde{\vec{r}}^{(3)}_i$ for $i=1+\beta+\gamma,..,n$ in (\ref{eq: modified moment matrix}) represents a truncated Gram-Schmidt orthogonalization.
Hence, we find:
\begin{align*}
	\Span\left\{ \vec{r}_i^{(3)}: i = 1+\beta+\gamma,\dots,n \right\} &\subset \Span\left\{ \vec{r}_i^{(3)}: i = 1,\dots,\gamma \right\}^{\perp_{\mat{\Lambda}^{(3)}}},
\end{align*}
and due to the definition of matrix $\tilde{\mat{\Lambda}}^{(3)}_{12}$:
\begin{align*}
\tilde{\mat{\Lambda}}^{(3)}_{12} = \left( <\tilde{\vec{r}}^{(3)}_i, \tilde{\vec{r}}^{(3)}_j>_{\mat{\Lambda}^{(3)}} \right)_{i=1,..,\gamma, j=1+\beta+\gamma,..,n} = \zero_{\gamma \times (n-\beta-\gamma)}.
\end{align*}
Therefore, the equivalent of condition (\ref{eq: condition general plugged in fully relative}) for $\tilde{\mat{M}}^{(3)}$, $\mat{\Lambda}^{(3)}$, and $\mat{E}^{(3)}$ is fulfilled.
By this, it is shown that $\mat{\Lambda}^{(3)}$ symmetrizes the modified collision matrix $\tilde{\mat{J}}^{(3)}$:
\begin{align*}
\tilde{\mat{J}}^{(3)}\ \mat{\Lambda}^{(3)} = \mat{\Lambda}^{(3)}\ \tilde{\mat{J}}^{(3)T}.
\end{align*}
Thus, $\tilde{\mat{J}}^{(3)}$ admits a pre-stability structure.
The proof for that this pre-stability structure is a stability structure for $\tau \in \Big[ \frac{1}{2}, \infty \Big)$ is analog to Theorem \ref{th: Fully Relative}.
Finally, since all moments necessary for ensuring $\alpha$-th order of consistency are unchanged, i.e. the first $\beta+\gamma$ rows of $\mat{M}^{(3)}$ and $\tilde{\mat{M}}^{(3)}$ coincide, the LBM with the modified collision matrix $\tilde{\mat{J}}^{(3)}$ is consistent to $\alpha$-th order.
\end{proof}
\begin{remark}[]
Theorem \ref{th: partially relative schemes} extends the idea of Theorem \ref{th: Fully Relative} and fully relative schemes to the wider class we call partially relative schemes.
The name is chosen to indicate that all moments necessary for ensuring the consistency order are chosen relative but the remaining moments are adapted for stability.
Due to these additional degrees of freedom, the partially relative schemes are applicable to wider set of linear collision operators and discrete velocity sets.
\end{remark}
\begin{remark}[The Equilibrium Distribution]
\label{re: eq distr}
Due to the specific structure of the partially relative schemes, the information the equilibrium \pdfs\ can be simplified as follows:
\begin{align*}
\vec{f}^{eq} &= \tilde{\mat{M}}^{(3)-1}\ \mat{E}^{(3)}\ \tilde{\mat{M}}^{(3)}\ \vec{f}\\
&= \begin{pmatrix}
\tilde{\mat{M}}^{(3)\dagger}_{11} & \tilde{\mat{M}}^{(3)\dagger}_{12} & \tilde{\mat{M}}^{(3)\dagger}_{13}\\
\tilde{\mat{M}}^{(3)\dagger}_{21} & \tilde{\mat{M}}^{(3)\dagger}_{22} & \tilde{\mat{M}}^{(3)\dagger}_{23}\\
\tilde{\mat{M}}^{(3)\dagger}_{31} & \tilde{\mat{M}}^{(3)\dagger}_{32} & \tilde{\mat{M}}^{(3)\dagger}_{33}
\end{pmatrix}\ \begin{pmatrix}
\Id_\gamma\\
& \zero_{\beta \times \beta}\\
& & \zero_{(n-\beta-\gamma) \times (n-\beta-\gamma)}
\end{pmatrix}\ \begin{pmatrix}
\tilde{\mat{M}}^{(3)}_{11} & \tilde{\mat{M}}^{(3)}_{12} & \tilde{\mat{M}}^{(3)}_{13}\\
\tilde{\mat{M}}^{(3)}_{21} & \tilde{\mat{M}}^{(3)}_{22} & \tilde{\mat{M}}^{(3)}_{23}\\
\tilde{\mat{M}}^{(3)}_{31} & \tilde{\mat{M}}^{(3)}_{32} & \tilde{\mat{M}}^{(3)}_{33}
\end{pmatrix}\\
&=\begin{pmatrix}
\tilde{\mat{M}}^{(3)\dagger}_{11}\\
\tilde{\mat{M}}^{(3)\dagger}_{21}\\
\tilde{\mat{M}}^{(3)\dagger}_{31}
\end{pmatrix}\ \vec{m}^{(cons)}
\end{align*}
Hence, the equilibrium \pdfs\ $\vec{f}^{eq}$ can be written in terms of the conserved moments.
Calculating the equilibrium matrix therefore only requires $2 \gamma n$ multiplications and $\gamma (n-1) + n (\gamma-1)$ additions.
\end{remark}
\begin{remark}[Implementation of the Modified Collision Operator]
Based on Remark \ref{re: eq distr}, the result of the modified collision operator $\tilde{\mat{J}}^{(3)}$ reads:
\begin{align*}
\tilde{\mat{J}}^{(3)}\ \vec{f} &= \frac{1}{\tau} \tilde{\mat{M}}^{(3)-1}\ \left( \mat{E} - \Id_n \right)\ \tilde{\mat{M}}^{(3)}\ \vec{f}\\
&= \frac{1}{\tau} \left( \begin{pmatrix}
\tilde{\mat{M}}^{(3)\dagger}_{11}\\
\tilde{\mat{M}}^{(3)\dagger}_{21}\\
\tilde{\mat{M}}^{(3)\dagger}_{31}
\end{pmatrix}\ \vec{m}^{(cons)} - \vec{f} \right).
\end{align*}
Thus, the complexity of computing the collision step is similar to the complexity of standard BGK models.
\end{remark}
\begin{remark}[]
\label{re: non-exhaustiveness}
Using the proposed structural approach, only a subset of all linear BGK-type collision operators can be constructed.
This is due to the two basic assumptions employed: 1. that the rows of the modified moment matrix $\tilde{\mat{M}}^{(3)}$ are constructed according to equation (\ref{eq: modified moment matrix}) (Theorem \ref{th: partially relative schemes}); and 2. that the equilibrium moments related to the moments generated by rows $\tilde{\vec{r}}^{(3)}_i$ for $i=1+\beta+\gamma,..,n$ of modified moment matrix $\tilde{\mat{M}}^{(3)}$ are set to $0$.
\end{remark}

\section{Application to the 3D Linearized Euler Equations}
\label{sec: 3D LEE}

The isothermal linearized Euler equations (LEE) are derived from the isothermal Euler equations:
\begin{align*}
\partial_t \rho + \nabla_\vec{x} \cdot \left( \rho \vec{u} \right) &= 0,\\
\partial_t \left( \rho \vec{u} \right) + \nabla_\vec{x} \cdot \left( \rho \vec{u} \otimes \vec{u} + c_s^2 \rho \Id \right) &= \vec{0},
\end{align*}
where $\rho$, $\vec{u}$, and $c_s$ describe density, velocity, and the speed of sound and $\otimes$ denotes the dyadic product.
The first equation represents conservation of mass and the second conservation of momentum.
The LEE are now obtained by linearization of the isothermal Euler equations around a background flow with density $\rho_0$ and velocity $\vec{u}_0$.
Let $\epsilon >0$ denote a linearization factor representing the scale of the fluctuations in density ($\rho'$) and velocity ($\vec{u}'$) around the background flow.
One then obtains:
\begin{align*}
\rho = \rho_0 + \epsilon \rho',\ \vec{u} = \vec{u}_0 + \epsilon \vec{u}'.
\end{align*}
The LEE then read:
\begin{align*}
\partial_t \rho' + \nabla_\vec{x} \cdot \left( \rho_0 \vec{u}' + \vec{u}_0 \rho' \right) &= 0,\\
\partial_t \left( \rho_0 \vec{u}' + \vec{u}_0 \rho' \right) + \nabla_\vec{x} \cdot \left( \rho_0 \vec{u}_0 \otimes \vec{u}' + \rho_0 \vec{u}' \otimes \vec{u}_0 + \vec{u}_0 \otimes \vec{u}_0 \rho' + c_s^2 \rho' \Id_2 \right) &= \vec{0}.
\end{align*}
While the derivation of stable linear collision operator is easy to show for flows without a background velocity for both isothermal flows (cf. Otte \cite{OtteDiss}) and for the compressible flows (cf. Otte and Frank \cite{Otte2016}), this is not true for the LEE with a non-vanishing background velocity.
This is particularly due to the additional terms
\begin{align*}
\nabla_{\vec{x}} \cdot \left( \rho_0 \vec{u}_0 \otimes \vec{u}' + \rho \vec{u}' \otimes \vec{u}_0 + \rho' \vec{u}_0 \otimes \vec{u}_0 \right)
\end{align*}
present in the momentum equation.
For the 3D case, this can be seen by Taylor-expanding the Maxwell distribution
\begin{align*}
	\frac{\rho}{(2 \pi c_s^2)^\frac{3}{2}} \exp\left( - \frac{|\vec{v}-\vec{u}|^2}{2c_s^2} \right)
\end{align*}
around the non-trivial background belocity $\vec{u}_0$ which results in a discrete equilibrium matrix
\begin{align*}
\mat{E}_{pdf} = \diag(\vec{w})\ \mat{E}_{symm}
\end{align*}
with symmetric $\mat{E}_{symm}$ which lends itself well to stability structures (cf. Rheinländer \cite{Rheinlaender2010}).
Still, for this equilibrium the lattice symmetries are more involved and in particular require the absence of aliasing of third- and fourth-order moments of the weights $\vec{w}$.
Hence, prohibitively large velocity sets are required.
In contrast, a simple linearization of the common equilibrium distribution for the Navier-Stokes equations around $\rho_0$ and $\vec{u}_0$ results in a discrete equilibrium
\begin{align*}
\mat{E}_{pdf} = \diag(\vec{w})\ \mat{E}_{asymm}
\end{align*}
with asymmetrical $\mat{E}_{asymm}$ which, in general, is not suited for application of the notion of stability structures (cf. Rheinländer \cite{Rheinlaender2010}).
This initially motivated the development of partially relative schemes as presented in section \ref{sec: Partially}.
\medskip
\\
Analog to Otte and Frank \cite{Otte2016}, one finds the following consistency result for a LBM scheme in acoustic scaling solving the LEE:
\begin{theorem}[Consistency Result]
\label{th: consistency}
Assume the Lattice Boltzmann equation in acoustic scaling with $\tau = \frac{1}{2}$, a discrete velocity set $\mathcal{S} = \left\{ \vec{c}_i: i=1,..,n\right\}$, and an equilibrium function $\vec{f}^{eq}(\cdot)$ fulfilling the following conditions on its moments:
\begin{align}
\sum_{i=1}^n f_i =\sum_{i=1}^n f^{eq}\left(\vec{f}\right) &= \rho',\label{eq: mom cond 1}\\
\sum_{i=1}^n \vec{c}_i f_i= \sum_{i=1}^n \vec{c}_i f^{eq}\left(\vec{f}\right) &= \rho_0 \vec{u}' + \vec{u}_0 \rho',\label{eq: mom cond 2}\\
\sum_{i=1}^n \vec{c}_i \otimes \vec{c}_i f^{eq}\left(\vec{f}\right) &= \rho_0\vec{u}_0 \vec{u}'^T + \rho_0 \vec{u}'\vec{u}_0^T + \vec{u}_0 \vec{u}_0^T \rho' + c_s^2 \rho' \Id\label{eq: mom cond 3}
\end{align}
Then this LB scheme is consistent of second order.
\end{theorem}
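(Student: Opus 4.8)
The plan is to follow the combined Taylor/Maxwell-iteration analysis of Otte and Frank \cite{Otte2016}. First I would Taylor-expand the left-hand side of the Lattice-Boltzmann equation (\ref{eq: LBE}) about $(t,\vec{x})$ in powers of the spacing $\Delta t$. Writing $D_i := \partial_t + \vec{c}_i \cdot \nabla_{\vec{x}}$, this gives
\begin{align*}
f_i(t+\Delta t,\vec{x}+\Delta t\vec{c}_i) = f_i + \Delta t\, D_i f_i + \tfrac{\Delta t^2}{2} D_i^2 f_i + \O(\Delta t^3).
\end{align*}
Inserting this into (\ref{eq: LBE}) and rearranging yields $f_i - f_i^{eq} = -\tau\big(\Delta t\, D_i f_i + \tfrac{\Delta t^2}{2} D_i^2 f_i\big) + \O(\Delta t^3)$, from which the leading order immediately gives $f_i = f_i^{eq} + \O(\Delta t)$ and the first non-equilibrium correction $f_i - f_i^{eq} = -\tau\,\Delta t\, D_i f_i^{eq} + \O(\Delta t^2)$.

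Next I would extract the macroscopic balances by taking the zeroth and first moments of the expanded scheme. Since mass and momentum are conserved, conditions (\ref{eq: mom cond 1}) and (\ref{eq: mom cond 2}) make the collision term drop out of both moment balances entirely. Writing $\vec{q} := \rho_0\vec{u}' + \vec{u}_0\rho'$ for the momentum and letting $\mat{F}$ denote the prescribed second moment in (\ref{eq: mom cond 3}), the $\O(1)$ part of the zeroth moment reproduces $\partial_t \rho' + \nabla_{\vec{x}}\cdot\vec{q} = 0$, the linearized continuity equation, via the flux (\ref{eq: mom cond 2}). For the first moment, the flux $\sum_i \vec{c}_i\otimes\vec{c}_i f_i$ equals $\mat{F}$ at leading order because there $f_i = f_i^{eq}$, so (\ref{eq: mom cond 3}) reproduces exactly the linearized momentum equation $\partial_t\vec{q} + \nabla_{\vec{x}}\cdot\mat{F} = \vec{0}$.

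To upgrade this to second order I would show that the $\O(\Delta t)$ truncation terms vanish. In the continuity balance the remaining error is $\tfrac{\Delta t}{2}\sum_i D_i^2 f_i^{eq}$; rewriting $\sum_i D_i^2 f_i^{eq} = \partial_t\big(\partial_t\rho' + \nabla_{\vec{x}}\cdot\vec{q}\big) + \nabla_{\vec{x}}\cdot\big(\partial_t\vec{q} + \nabla_{\vec{x}}\cdot\mat{F}\big)$ and substituting the leading-order equations makes it vanish identically. In the momentum balance two $\O(\Delta t)$ contributions appear: the second-order Taylor term $\tfrac{\Delta t}{2}\sum_i \vec{c}_i D_i^2 f_i^{eq}$ and the divergence of the non-equilibrium flux correction $-\tau\,\Delta t\sum_i \vec{c}_i\otimes\vec{c}_i D_i f_i^{eq}$. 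Setting $\tau = \tfrac{1}{2}$ makes the two prefactors coincide; the third-moment contributions $\nabla_{\vec{x}}\cdot\sum_i \vec{c}_i\otimes\vec{c}_i D_i f_i^{eq}$ cancel between them, leaving $\tfrac{\Delta t}{2}\,\partial_t\sum_i\vec{c}_i D_i f_i^{eq} = \tfrac{\Delta t}{2}\,\partial_t\big(\partial_t\vec{q} + \nabla_{\vec{x}}\cdot\mat{F}\big)$, which again vanishes by the leading-order momentum equation. Hence the truncation error is $\O(\Delta t^2)$ and the scheme is second-order consistent.

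The main obstacle is the momentum-balance bookkeeping at first order: one must collect the non-equilibrium flux correction together with the second-order Taylor term and verify that the uncontrolled third equilibrium moment cancels, so that only the prescribed moments (\ref{eq: mom cond 1})--(\ref{eq: mom cond 3}) govern the result. This cancellation is exactly what forces the choice $\tau = \tfrac{1}{2}$ and what permits the third- and higher-order equilibrium moments to remain unprescribed, in line with the philosophy of the partially relative schemes, where those moments enter only the error term and not the order of consistency.
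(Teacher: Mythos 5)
Your proposal is correct, and it reaches the result by a genuinely different bookkeeping than the paper's proof. The paper posits a formal asymptotic (Hilbert-type) expansion $\vec{f} = \sum_k \Delta t^k \vec{f}^{(k)}$, $\vec{f}^{eq} = \sum_k \Delta t^k \vec{f}^{eq(k)}$ (and likewise for $\rho'$, $\vec{u}'$), sorts the Taylor-expanded scheme into separate equations at orders $\Delta t^0$, $\Delta t^1$, $\Delta t^2$, solves the first-order equation for $f_i^{(1)}$, and substitutes it into the second-order equation; there the prefactor $\left(\tfrac{1}{2}-\tau\right)$ multiplying the second derivatives of $f_i^{eq(0)}$ vanishes at $\tau=\tfrac{1}{2}$, which is precisely where the unprescribed third equilibrium moment would otherwise enter the first-moment balance. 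Second-order consistency is then read off from the fact that both $(\rho'^{(0)},\vec{u}'^{(0)})$ and $(\rho'^{(1)},\vec{u}'^{(1)})$ satisfy the LEE. You avoid expanding the solution altogether: you use the Maxwell iteration $f_i = f_i^{eq} - \tau\,\Delta t\, D_i f_i^{eq} + \O(\Delta t^2)$ inside the exact moment balances of the scheme, and your cancellation is correct --- by the identity
\begin{align*}
\sum_i \vec{c}_i D_i^2 f_i^{eq} = \partial_t \sum_i \vec{c}_i D_i f_i^{eq} + \nabla_{\vec{x}}\cdot\sum_i \vec{c}_i\otimes\vec{c}_i\, D_i f_i^{eq},
\end{align*}
the divergence term cancels exactly against the non-equilibrium flux correction when $\tau=\tfrac{1}{2}$, leaving only $\tfrac{\Delta t}{2}\,\partial_t\big(\partial_t\vec{q}+\nabla_{\vec{x}}\cdot\mat{F}\big)$, so the unprescribed third moment indeed never appears. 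What each approach buys: yours is more compact, stays with the physical moments, and makes transparent both why $\tau=\tfrac{1}{2}$ is forced and why the higher equilibrium moments remain free (the core idea behind the partially relative schemes); the paper's expansion buys a cleaner logical structure, since each expansion order satisfies its PDE exactly, whereas your closing steps (``substituting the leading-order equations makes it vanish'') implicitly require that $\partial_t$ and $\nabla_{\vec{x}}$ applied to the $\O(\Delta t)$ residuals remain $\O(\Delta t)$ --- a formal smoothness assumption the order-by-order expansion sidesteps; also note the substitution makes those terms $\O(\Delta t)$, hence the residual $\O(\Delta t^2)$, rather than ``vanish identically.'' Both are formal consistency analyses of equal strength built on the same key cancellation.
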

\begin{proof}
The LBE in acoustic scaling is given by:
\begin{align}
\label{eq: LBE ac}
f_i(t+\Delta t,\vec{x} + \Delta t \vec{c}_i) - f_i(t,\vec{x}) &= \frac{1}{\tau} \left( f^{eq}_i(\vec{f}) - \vec{f} \right),\ \text{for}\ i=1,..,n.
\end{align}
Assume a formal expansion of $\vec{f}$, $\vec{f}^{eq}(\vec{f})$, and the macroscopic quantities $\rho'$ and $\vec{u'}$:
\begin{align*}
\vec{f} &= \sum_{k=0}^\infty \Delta t^k \vec{f}^{(k)}, & \vec{f}^{eq} &= \sum_{k=0}^\infty \Delta t^k \vec{f}^{eq(k)},\\
\rho' &= \sum_{k=0}^\infty \Delta t^k \rho'^{(k)}, & \vec{u}' &= \sum_{k=0}^\infty \Delta t^k \vec{u}'^{(k)}.
\end{align*}
For brevity, the arument of $\vec{f}^{eq}$ is dropped.
After plugging this into (\ref{eq: LBE ac}), Taylor expansion in time and space around $(t,\vec{x})$, and sorting by powers of $\Delta t$, one obtains:
\begin{align}
\O\left(\Delta t^0\right): && 0 &= \frac{1}{\tau} \left( f_i^{eq(0)} - f_i^{(0)} \right),\label{eq: LBE Taylor 1}\\
\O\left(\Delta t^1\right): && \partial_t f_i^{(0)} + \vec{c}_i \cdot \nabla_\vec{x} f_i^{(0)} &= \frac{1}{\tau} \left( f_i^{eq(1)} - f_i^{(1)} \right),\label{eq: LBE Taylor 2}\\
\O\left(\Delta t^2\right): && \partial_t f_i^{(1)} + \vec{c}_i \cdot \nabla_\vec{x} f_i^{(1)}\nonumber\\
&& + \frac{1}{2} \partial_t^2 f_i^{(0)} + \vec{c}_i \cdot \partial_t \nabla_\vec{x} f_i^{(0)} + \frac{1}{2} \vec{c}_i \otimes \vec{c}_i: \nabla_\vec{x} \otimes \nabla_\vec{x} f_i^{(0)} &= \frac{1}{\tau} \left( f_i^{eq(2)} - f_i^{(2)} \right),\label{eq: LBE Taylor 3}
\end{align}
where $:$ denotes a tensor contraction.
From equation (\ref{eq: LBE Taylor 1}) one finds: $\vec{f}^{(0)} = \vec{f}^{eq(0)}$.
With conditions (\ref{eq: mom cond 1}) and (\ref{eq: mom cond 2}), the zeroth and first order moments of equation (\ref{eq: LBE Taylor 2}) read:
\begin{align*}
\partial_t \rho'^{(0)} + \nabla_\vec{x} \cdot \left( \rho_0 \vec{u}'^{(0)} + \vec{u}_0 \rho'^{(0)} \right) &= 0, \\
\partial_t \left( \rho_0 \vec{u}'^{(0)} + \vec{u}_0 \rho'^{(0)} \right) + \nabla_\vec{x} \cdot \left( \rho_0 \vec{u}_0 \otimes \vec{u}'^{(0)} + \rho_0 \vec{u}'^{(0)} \otimes \vec{u}_0 + \vec{u}_0 \otimes \vec{u}_0 \rho'^{(0)} + c_s^2 \rho'^{(0)} \Id_2 \right) &= \vec{0}.
\end{align*}
Hence, the LB scheme is at least first order consistent.
By rearranging equation (\ref{eq: LBE Taylor 2}), one can derive a form for $f_i^{(1)}$:
\begin{align*}
f_i^{(1)} = f_i^{eq(1)} - \tau \left( \partial_t f_i^{(0)} + \vec{c}_i \cdot \nabla_\vec{x} f_i^{(0)} \right).
\end{align*}
Plugging this into equation (\ref{eq: LBE Taylor 3}), this gives:
\begin{align*}
&\partial_t f_i^{eq(1)} + \vec{c}_i \cdot \nabla_{\vec{x}} f_i^{eq(1)}\\
+& \left( \frac{1}{2} - \tau \right) \left( \partial_t^2 f_i^{eq(0)} + 2 \vec{c}_i \cdot \partial_t \nabla_\vec{x} f_i^{eq(0)}+ \vec{c}_i \otimes \vec{c}_i : \nabla_\vec{x} \otimes \nabla_\vec{x} f_i^{eq(2)} \right) &= \frac{1}{\tau} \left( f^{eq(2)}_i - f^{(2)}_i \right).
\end{align*}
With $\tau = \frac{1}{2}$, the term involving $f^{eq(0)}_i$ vanishes and therefore the zeroth and first order moments are given by:
\begin{align*}
\partial_t \rho'^{(1)} + \nabla_\vec{x} \cdot \left( \rho_0 \vec{u}'^{(1)} + \vec{u}_0 \rho'^{(1)} \right) &= 0, \\
\partial_t \left( \rho_0 \vec{u}'^{(1)} + \vec{u}_0 \rho'^{(1)} \right) + \nabla_\vec{x} \cdot \left( \rho_0 \vec{u}_0 \otimes \vec{u}'^{(1)} + \rho_0 \vec{u}'^{(1)} \otimes \vec{u}_0 + \vec{u}_0 \otimes \vec{u}_0 \rho'^{(1)} + c_s^2 \rho'^{(1)} \Id_2 \right) &= \vec{0}.
\end{align*}
Hence, the LB scheme is of second order consistency.
\end{proof}
\begin{remark}[]
As shown in Theorem \ref{th: consistency}, for proof of consistency it suffices to assume a discrete velocity set $\mathcal{S}$ for which an equilibrium distribution function $\vec{f}^{eq}(\cdot)$ satisfying conditions (\ref{eq: mom cond 1})--(\ref{eq: mom cond 3}) exists.
\end{remark}
Based on this general consistency result, it is now shown that for the D3Q33 velocity set with $c_s^2 = \frac{1}{3}$ partially relative schemes exist admitting a stability structure.
Note that for velocity sets smaller than D3Q33, i.e. the D3Q27 velocity set and its symmetric subsets D3Q7, D3Q9, D3Q13, D3Q15, D3Q19, and D3Q21, linear program (\ref{eq: linear program}) does not assume a feasible solution.
The D3Q33 velocity set is given by:
\begin{align*}
\mathcal{S} = \big\{ & \vec{c}_1 = (0,0,0)^T,\\
& \vec{c}_2 = (-1,0,0)^T, \vec{c}_3 = (1,0,0)^T, \vec{c}_4 = (0,-1,0)^T, \vec{c}_5 = (0,1,0)^T, \vec{c}_6 - (0,0,-1)^T, \vec{c}_7 = (0,0,1)^T\\
& \vec{c}_8 = (-1,-1,0)^T, \vec{c}_9 = (-1,1,0)^T, \vec{c}_{10} = (1,-1,0)^T, \vec{c}_{11} = (1,1,0)^T,\\
& \vec{c}_{12} = (-1,0,-1)^T, \vec{c}_{13} = (-1,0,1)^T, \vec{c}_{14} = (1,0,-1)^T, \vec{c}_{15} = (1,0,1)^T,\\
& \vec{c}_{16} = (0,-1,-1)^T, \vec{c}_{17} = (0,-1,1)^T, \vec{c}_{18} = (0,1,-1)^T, \vec{c}_{19} = (0,1,1)^T,\\
& \vec{c}_{20} = (-1,-1,-1)^T, \vec{c}_{21} = (-1,-1,1)^T, \vec{c}_{22} = (-1,1,-1)^T, \vec{c}_{23} = (-1,1,1)^T,\\
& \vec{c}_{24} = (1,-1,-1)^T, \vec{c}_{25} = (1,-1,1)^T, \vec{c}_{26} = (1,1,-1)^T, \vec{c}_{27} = (1,1,1)^T,\\
& \vec{c}_{28} = (-2,0,0)^T, \vec{c}_{29} = (2,0,0)^T, \vec{c}_{30} = (0,-2,0)^T, \vec{c}_{31} = (0,2,0)^T, \vec{c}_{32} = (0,0,-2)^T, \vec{c}_{33} = (0,0,2)^T \big\}.
\end{align*}
Assume the moment matrix $\mat{M}^{(1)}$ where the first row maps the \pdfs\ onto density, the following three rows onto momentum, the next six rows onto the second moments, and the remaining 23 rows onto all unique raw third, fourth, fifth, and sixth order moments.
The moment matrix $\mat{M}^{(1)}$ is shown in Figure \ref{fig: Matrix M0}.
The partially relative moment matrix $\mat{M}^{(3)}$ is obtained according to (\ref{eq: relative moment matrix}).
The orthogonality constraint (\ref{eq: orthogonality constraint partially}) reads:
\begin{align*}
\vec{r}^{(3)}_i\ \mat{\Lambda}^{(3)}\ \vec{r}^{(3)T}_j \overset{!}{=} 0,\ \text{for}\ i=1,..,4, j=5,..,10.
\end{align*}
Since all terms in these 24 conditions are linear w.r.t. the diagonal entries of $\mat{\Lambda}^{(3)}$, these conditions can be written in form of a linear system:
\begin{align}
\mat{A}\ \vec{\lambda}^{(3)} \overset{!}{=} \vec{0}_{24},\label{eq: kernel condition}
\end{align}
where $\vec{\lambda}^{(3)}$ denotes the vector containing the diagonal entries of $\mat{\Lambda}^{(3)}$.
Hence, any solution to condition (\ref{eq: orthogonality constraint partially}) must be an element of the kernel $\ker(\mat{A})$ of $\mat{A}$.
Since $\Dim\left( \ker(\mat{A}) \right) = 14$, each element of the kernel can be described by a basis of the kernel and 14 coefficients $\alpha_i$ for $i=1,..,14$.
An element of the kernel has been computed using \textit{Mathematica} but its form is too complicated for prcatical use.
Hence, it is not presented here.
Due to Theorem \ref{th: partially relative schemes}, the modified collision operator $\tilde{\mat{J}}^{(3)}$ defined in equation (\ref{eq: orthogonality theorem}) admits a pre-stability if and only if matrix $\mat{\Lambda}^{(3)}$ is diagonal and positive definite.
The matrix $\mat{\Lambda}^{(3)}$ is diagonal and positive definite if and only if a vector $\vec{k} \in \ker(\mat{A})$ exists which lies in the positive orthant, i.e. $k_i >0$ for $i=1,..,n$.
Figure \ref{fig: stability domain} shows combinations of $u_{01}$, $u_{02}$, and $u_{03}$ for which such a $\vec{k}$ exists.
Assuming the kernel of matrix $\mat{A}$ contains an element within the positive orthant, it contains infinitely many such elements.
In order to choose an element within the intersection of $\ker(\mat{A})$ and the positive orthant which does not contain excessively large components, we replace condition (\ref{eq: kernel condition}) by the linear program:
\begin{align}
\left.
\begin{aligned}
\min_{\vec{\lambda} \in \R^n} & \sum_{i=1}^n \lambda_i\\
\text{s.t.}\ & \vec{\lambda} \in \ker(\mat{A})\\
& \lambda_i \geq 1\ \text{for}\ i=1,\dots,n.
\end{aligned}
\right\}\label{eq: linear program}
\end{align}
The objective function is chosen such that the magnitude of the components of $\vec{\lambda}$ is controlled while condition $\lambda_i \geq 1$ for $i=1,\dots,n$ is used to avoid solutions with components almost $0$.
Since for every $\vec{\lambda} \in \ker(\mat{A})$ also $\alpha \vec{\lambda} \in \ker(\mat{A})$ for $\alpha \in R$, condition $\lambda_i \geq 1$ for $i=1,\dots,n$ does not impose an additional constraint on the existence of a solution compared to condition (\ref{eq: kernel condition}).
Hence, every feasible solution to linear program (\ref{eq: linear program}) also is a solution to kernel condition (\ref{eq: kernel condition}).
\begin{remark}[]
While, the consistency result of Theorem \ref{th: consistency} guarantees second-order consistency, the structure of the error term depends on the choice of background velocities.
This is due to the process of constructing rows $\tilde{\vec{r}}_i^{(3)}$ for $i=1+\beta+\gamma,..,n$ of the modified moment matrix $\tilde{\mat{M}}^{(3)}$ (compare (\ref{eq: modified moment matrix}) and the algorithm of choosing the diagonal and positive definite matrix $\mat{\Lambda}^{(3)}$ (compare linear program (\ref{eq: linear program})).
Hence, Galilean invariance of the resulting methods w.r.t. the background velocity $\vec{u}_0$ is not ensured.
\end{remark}
In Figures \ref{fig: stab dom 1 6} and \ref{fig: stab dom 1 3}, we present the domain within the plane spanned by the second and third components of background velocity $\vec{u}_0$ for which linear program (\ref{eq: linear program}) admits a feasible solution for fixed values $\frac{1}{6}$ and $\frac{1}{3}$ of the first component of $\vec{u}_0$.
Note that no feasible solution to linear program (\ref{eq: linear program}) could be found for background velocities with at least one vanishing component.
This effect is due to the non-exhaustiveness of the proposed approach as noted in Remark \ref{re: non-exhaustiveness} in combination with the particular structure of the feasibility condition (\ref{eq: kernel condition}) also found in the linear program (\ref{eq: linear program}).
Still, we find that the proposed approach provides a helpful tool to the construction of stable linear collision operators for a wide variety of background velocities.
\begin{sidewaysfigure}
\begin{small}
\begin{align*}
\mat{M}^{(1)} = \left(
\begin{array}{ccccccccccccccccccccccccccccccccc}
 1 & 1 & 1 & 1 & 1 & 1 & 1 & 1 & 1 & 1 & 1 & 1 & 1 & 1 & 1 & 1 & 1 & 1 & 1 & 1 & 1 & 1 & 1 & 1 & 1 & 1 & 1 & 1 & 1 & 1 & 1 & 1 & 1 \\
 0 & -1 & 1 & 0 & 0 & 0 & 0 & -1 & -1 & 1 & 1 & -1 & -1 & 1 & 1 & 0 & 0 & 0 & 0 & -1 & -1 & -1 & -1 & 1 & 1 & 1 & 1 & -2 & 2 & 0 & 0 & 0 & 0 \\
 0 & 0 & 0 & -1 & 1 & 0 & 0 & -1 & 1 & -1 & 1 & 0 & 0 & 0 & 0 & -1 & -1 & 1 & 1 & -1 & -1 & 1 & 1 & -1 & -1 & 1 & 1 & 0 & 0 & -2 & 2 & 0 & 0 \\
 0 & 0 & 0 & 0 & 0 & -1 & 1 & 0 & 0 & 0 & 0 & -1 & 1 & -1 & 1 & -1 & 1 & -1 & 1 & -1 & 1 & -1 & 1 & -1 & 1 & -1 & 1 & 0 & 0 & 0 & 0 & -2 & 2 \\
 0 & 1 & 1 & 0 & 0 & 0 & 0 & 1 & 1 & 1 & 1 & 1 & 1 & 1 & 1 & 0 & 0 & 0 & 0 & 1 & 1 & 1 & 1 & 1 & 1 & 1 & 1 & 4 & 4 & 0 & 0 & 0 & 0 \\
 0 & 0 & 0 & 0 & 0 & 0 & 0 & 1 & -1 & -1 & 1 & 0 & 0 & 0 & 0 & 0 & 0 & 0 & 0 & 1 & 1 & -1 & -1 & -1 & -1 & 1 & 1 & 0 & 0 & 0 & 0 & 0 & 0 \\
 0 & 0 & 0 & 0 & 0 & 0 & 0 & 0 & 0 & 0 & 0 & 1 & -1 & -1 & 1 & 0 & 0 & 0 & 0 & 1 & -1 & 1 & -1 & -1 & 1 & -1 & 1 & 0 & 0 & 0 & 0 & 0 & 0 \\
 0 & 0 & 0 & 1 & 1 & 0 & 0 & 1 & 1 & 1 & 1 & 0 & 0 & 0 & 0 & 1 & 1 & 1 & 1 & 1 & 1 & 1 & 1 & 1 & 1 & 1 & 1 & 0 & 0 & 4 & 4 & 0 & 0 \\
 0 & 0 & 0 & 0 & 0 & 0 & 0 & 0 & 0 & 0 & 0 & 0 & 0 & 0 & 0 & 1 & -1 & -1 & 1 & 1 & -1 & -1 & 1 & 1 & -1 & -1 & 1 & 0 & 0 & 0 & 0 & 0 & 0 \\
 0 & 0 & 0 & 0 & 0 & 1 & 1 & 0 & 0 & 0 & 0 & 1 & 1 & 1 & 1 & 1 & 1 & 1 & 1 & 1 & 1 & 1 & 1 & 1 & 1 & 1 & 1 & 0 & 0 & 0 & 0 & 4 & 4 \\
 0 & -1 & 1 & 0 & 0 & 0 & 0 & -1 & -1 & 1 & 1 & -1 & -1 & 1 & 1 & 0 & 0 & 0 & 0 & -1 & -1 & -1 & -1 & 1 & 1 & 1 & 1 & -8 & 8 & 0 & 0 & 0 & 0 \\
 0 & 0 & 0 & -1 & 1 & 0 & 0 & -1 & 1 & -1 & 1 & 0 & 0 & 0 & 0 & -1 & -1 & 1 & 1 & -1 & -1 & 1 & 1 & -1 & -1 & 1 & 1 & 0 & 0 & -8 & 8 & 0 & 0 \\
 0 & 0 & 0 & 0 & 0 & -1 & 1 & 0 & 0 & 0 & 0 & -1 & 1 & -1 & 1 & -1 & 1 & -1 & 1 & -1 & 1 & -1 & 1 & -1 & 1 & -1 & 1 & 0 & 0 & 0 & 0 & -8 & 8 \\
 0 & 0 & 0 & 0 & 0 & 0 & 0 & -1 & 1 & -1 & 1 & 0 & 0 & 0 & 0 & 0 & 0 & 0 & 0 & -1 & -1 & 1 & 1 & -1 & -1 & 1 & 1 & 0 & 0 & 0 & 0 & 0 & 0 \\
 0 & 0 & 0 & 0 & 0 & 0 & 0 & 0 & 0 & 0 & 0 & -1 & 1 & -1 & 1 & 0 & 0 & 0 & 0 & -1 & 1 & -1 & 1 & -1 & 1 & -1 & 1 & 0 & 0 & 0 & 0 & 0 & 0 \\
 0 & 0 & 0 & 0 & 0 & 0 & 0 & -1 & -1 & 1 & 1 & 0 & 0 & 0 & 0 & 0 & 0 & 0 & 0 & -1 & -1 & -1 & -1 & 1 & 1 & 1 & 1 & 0 & 0 & 0 & 0 & 0 & 0 \\
 0 & 0 & 0 & 0 & 0 & 0 & 0 & 0 & 0 & 0 & 0 & 0 & 0 & 0 & 0 & -1 & 1 & -1 & 1 & -1 & 1 & -1 & 1 & -1 & 1 & -1 & 1 & 0 & 0 & 0 & 0 & 0 & 0 \\
 0 & 0 & 0 & 0 & 0 & 0 & 0 & 0 & 0 & 0 & 0 & -1 & -1 & 1 & 1 & 0 & 0 & 0 & 0 & -1 & -1 & -1 & -1 & 1 & 1 & 1 & 1 & 0 & 0 & 0 & 0 & 0 & 0 \\
 0 & 0 & 0 & 0 & 0 & 0 & 0 & 0 & 0 & 0 & 0 & 0 & 0 & 0 & 0 & -1 & -1 & 1 & 1 & -1 & -1 & 1 & 1 & -1 & -1 & 1 & 1 & 0 & 0 & 0 & 0 & 0 & 0 \\
 0 & 0 & 0 & 0 & 0 & 0 & 0 & 0 & 0 & 0 & 0 & 0 & 0 & 0 & 0 & 0 & 0 & 0 & 0 & -1 & 1 & 1 & -1 & 1 & -1 & -1 & 1 & 0 & 0 & 0 & 0 & 0 & 0 \\
 0 & 1 & 1 & 0 & 0 & 0 & 0 & 1 & 1 & 1 & 1 & 1 & 1 & 1 & 1 & 0 & 0 & 0 & 0 & 1 & 1 & 1 & 1 & 1 & 1 & 1 & 1 & 16 & 16 & 0 & 0 & 0 & 0 \\
 0 & 0 & 0 & 1 & 1 & 0 & 0 & 1 & 1 & 1 & 1 & 0 & 0 & 0 & 0 & 1 & 1 & 1 & 1 & 1 & 1 & 1 & 1 & 1 & 1 & 1 & 1 & 0 & 0 & 16 & 16 & 0 & 0 \\
 0 & 0 & 0 & 0 & 0 & 1 & 1 & 0 & 0 & 0 & 0 & 1 & 1 & 1 & 1 & 1 & 1 & 1 & 1 & 1 & 1 & 1 & 1 & 1 & 1 & 1 & 1 & 0 & 0 & 0 & 0 & 16 & 16 \\
 0 & 0 & 0 & 0 & 0 & 0 & 0 & 1 & 1 & 1 & 1 & 0 & 0 & 0 & 0 & 0 & 0 & 0 & 0 & 1 & 1 & 1 & 1 & 1 & 1 & 1 & 1 & 0 & 0 & 0 & 0 & 0 & 0 \\
 0 & 0 & 0 & 0 & 0 & 0 & 0 & 0 & 0 & 0 & 0 & 1 & 1 & 1 & 1 & 0 & 0 & 0 & 0 & 1 & 1 & 1 & 1 & 1 & 1 & 1 & 1 & 0 & 0 & 0 & 0 & 0 & 0 \\
 0 & 0 & 0 & 0 & 0 & 0 & 0 & 0 & 0 & 0 & 0 & 0 & 0 & 0 & 0 & 1 & 1 & 1 & 1 & 1 & 1 & 1 & 1 & 1 & 1 & 1 & 1 & 0 & 0 & 0 & 0 & 0 & 0 \\
 0 & 0 & 0 & 0 & 0 & 0 & 0 & 0 & 0 & 0 & 0 & 0 & 0 & 0 & 0 & 0 & 0 & 0 & 0 & 1 & -1 & -1 & 1 & 1 & -1 & -1 & 1 & 0 & 0 & 0 & 0 & 0 & 0 \\
 0 & 0 & 0 & 0 & 0 & 0 & 0 & 0 & 0 & 0 & 0 & 0 & 0 & 0 & 0 & 0 & 0 & 0 & 0 & 1 & -1 & 1 & -1 & -1 & 1 & -1 & 1 & 0 & 0 & 0 & 0 & 0 & 0 \\
 0 & 0 & 0 & 0 & 0 & 0 & 0 & 0 & 0 & 0 & 0 & 0 & 0 & 0 & 0 & 0 & 0 & 0 & 0 & 1 & 1 & -1 & -1 & -1 & -1 & 1 & 1 & 0 & 0 & 0 & 0 & 0 & 0 \\
 0 & 0 & 0 & 0 & 0 & 0 & 0 & 0 & 0 & 0 & 0 & 0 & 0 & 0 & 0 & 0 & 0 & 0 & 0 & -1 & 1 & -1 & 1 & -1 & 1 & -1 & 1 & 0 & 0 & 0 & 0 & 0 & 0 \\
 0 & 0 & 0 & 0 & 0 & 0 & 0 & 0 & 0 & 0 & 0 & 0 & 0 & 0 & 0 & 0 & 0 & 0 & 0 & -1 & -1 & 1 & 1 & -1 & -1 & 1 & 1 & 0 & 0 & 0 & 0 & 0 & 0 \\
 0 & 0 & 0 & 0 & 0 & 0 & 0 & 0 & 0 & 0 & 0 & 0 & 0 & 0 & 0 & 0 & 0 & 0 & 0 & -1 & -1 & -1 & -1 & 1 & 1 & 1 & 1 & 0 & 0 & 0 & 0 & 0 & 0 \\
 0 & 0 & 0 & 0 & 0 & 0 & 0 & 0 & 0 & 0 & 0 & 0 & 0 & 0 & 0 & 0 & 0 & 0 & 0 & 1 & 1 & 1 & 1 & 1 & 1 & 1 & 1 & 0 & 0 & 0 & 0 & 0 & 0 \\
\end{array}
\right)
\end{align*}
\end{small}
\caption{D3Q33 raw moment matrix $\mat{M}^{(1)}$}
\label{fig: Matrix M0}
\end{sidewaysfigure}

\begin{figure}
	\begin{subfigure}{0.45\textwidth}
		\includegraphics[width=\textwidth]{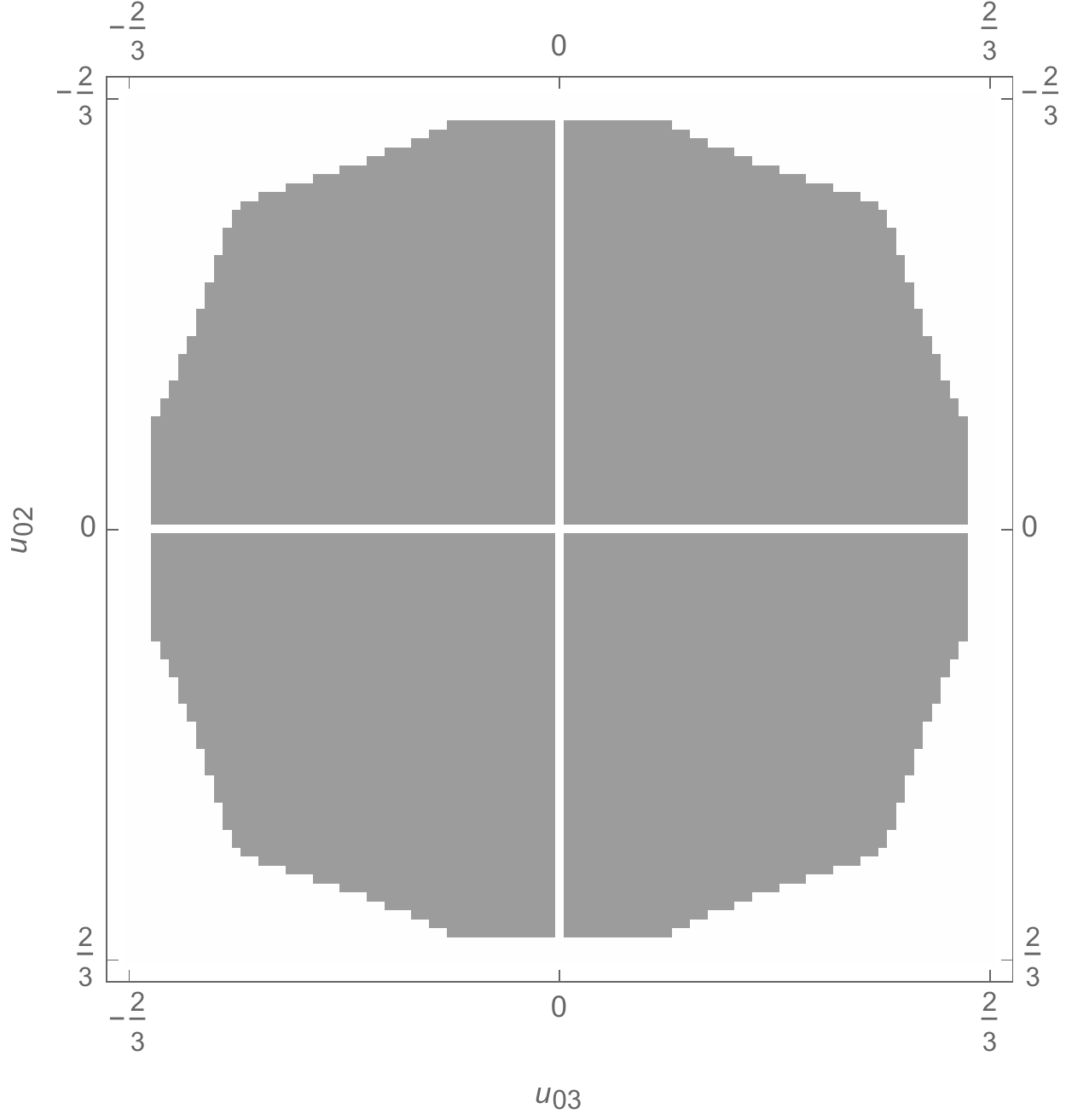}
		\caption{Approximated domain of background for velocity components two and three of $\vec{u}_0$ and $u_{01} = \frac{1}{6}$.}
		\label{fig: stab dom 1 6}
	\end{subfigure}
	\hfill
	\begin{subfigure}{0.45\textwidth}
		\includegraphics[width=\textwidth]{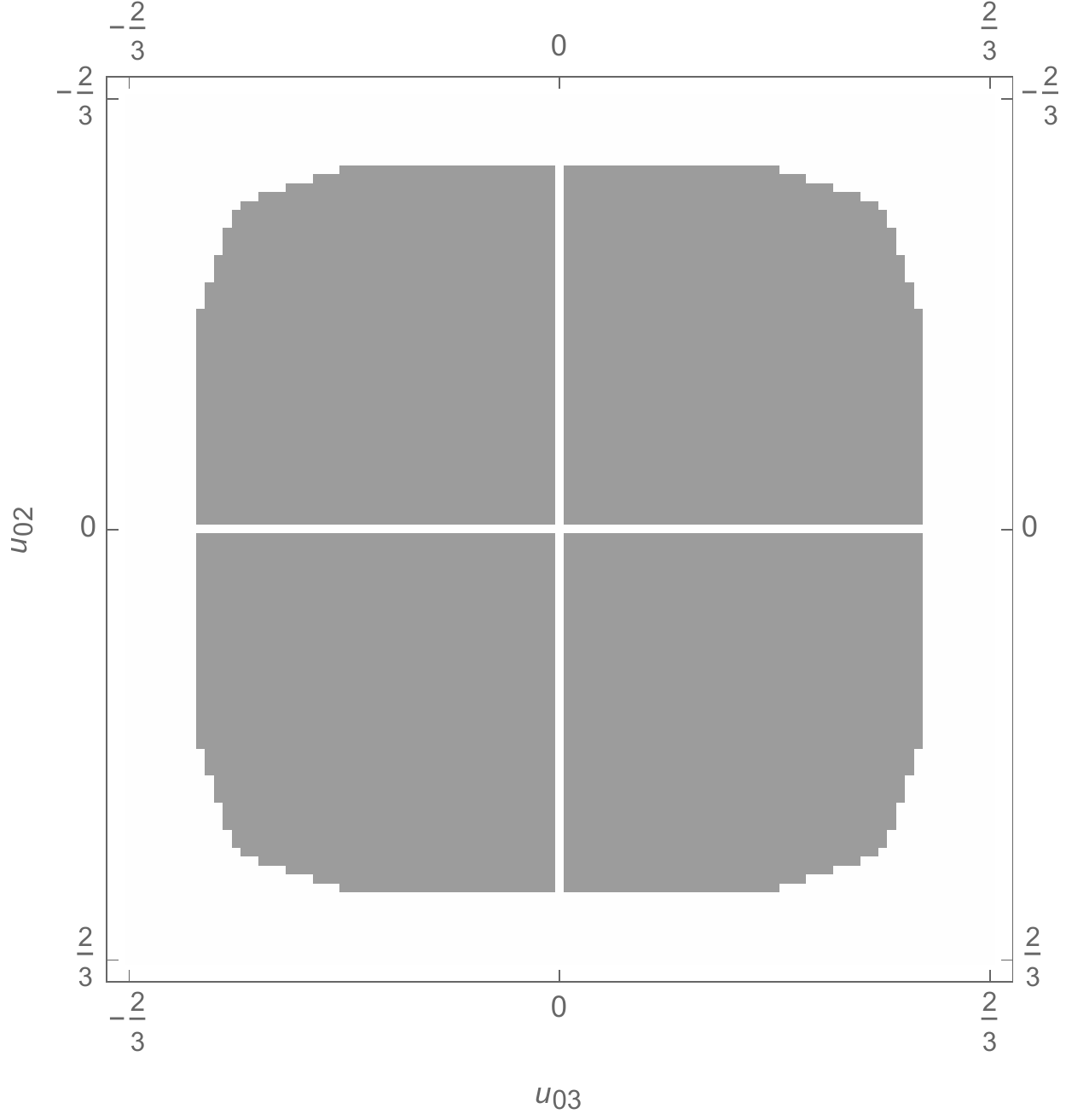}
		\caption{Approximated domain of background for velocity components two and three of $\vec{u}_0$ and $u_{01} = \frac{1}{3}$.}
		\label{fig: stab dom 1 3}
	\end{subfigure}
	\caption{Stability domains w.r.t. $u_{02}$ and $u_{03}$ for different values of $u_{01}$. Gray parts of the domain represent combinations of components two and three of background velocity $\vec{u}_0$ for fixed values of the first component for which a collision operator $\mat{J}^{(3)}$ admits a stability structure for $\tau \geq \frac{1}{2}$.}
	\label{fig: stability domain}
\end{figure}

\section{Results for the 3D Linearized Euler Equations}
\label{sec: results}

In this section, we present results for 3D flows simulated using the scheme derived in section \ref{sec: 3D LEE}.
We consider three test-cases: two pseudo-1D test cases allowing for comparison with the exact solution; and a 3D test case for which we study convergence to a highly-resolved solution.
All test cases are defined on the periodic domain $[0,1)^3$, use relaxation time $\tau = \frac{1}{2}$, and results are presented for four different background velocities $\vec{u}_0$:
\begin{itemize}
	\item $\vec{u}_0 = \frac{1}{\sqrt{3}} \left( \frac{3}{20}, \frac{1}{10}, \frac{1}{5} \right)^T$;
	\item $\vec{u}_0 = \frac{1}{\sqrt{3}} \left( -\frac{1}{4}, \frac{1}{4}, \frac{1}{2} \right)^T$;
	\item $\vec{u}_0 = \frac{1}{\sqrt{3}} \left( \frac{2}{5}, \frac{9}{10}, \frac{3}{4} \right)^T$; and
	\item $\vec{u}_0 = \frac{1}{\sqrt{3}} \left( \frac{3}{4}, \frac{5}{8}, 1 \right)^T$.
\end{itemize}
The convergence results presented in Figures \ref{fig: test case 1}--\ref{fig: test case 3} are measured in the discrete $L^\infty$-norm.
Note that the discrete $L^\infty$-norm is stricter than the discrete $L^2$-norm.
\medskip
\\
The first test case is a pseudo-1D one meaning that the initial conditions are chosen such that they depend on a single dimension only.
It represents a simple test case with only few modes present in its solution and with all modes admitting long wavelengths.
The background density $\rho_0$ is set to $\frac{2}{5}$ and the initial conditions are given as:
\begin{equation*}
	\begin{aligned}
	\rho'(t=0,x) &= \cos( 4 \pi x ),\\
	u'(t=0,x) &= \frac{5}{\sqrt{3}} \cos( 2 \pi x ),
	\end{aligned}
\end{equation*}
where variable $x$ denotes the first component of spatial vector $\vec{x}$.
From Figure \ref{fig: test case 1}, we find clear second-order convergence of the LBM using the partially relative velocity collision operator introduced in sections \ref{sec: Partially} and \ref{sec: 3D LEE}.
Note that the small initial bump is due to resolution insufficient for capturing the modes of the solution.
\medskip
\\
The second test case as well is a pseudo-1D test but with additional modes with higher wavenumber.
The background density $\rho_0$ is set to $\frac{1}{5}$ and the initial conditions are given as:
\begin{equation*}
	\begin{aligned}
	\rho'(t=0,x) &= \frac{7}{10} \sin( 2 \pi x ) \sin( 4 \pi x ) \cos( 4 \pi x ) \cos( 8 \pi x ),\\
	u'(t=0,x) &= \frac{5}{2\sqrt{3}} \sin( 8 \pi x ) \cos( 2 \pi x ),
	\end{aligned}
\end{equation*}
where variable $x$ denotes the first component of spatial vector $\vec{x}$.
The solution to this test case contains more and higher-frequency modes than that of test case 1.
Due to this, we observe in Figure \ref{fig: test case 2} that a higher minimum resolution is required before second-order convergence manifests.
Still, as soon as the resolution is sufficient for capturing the modes of the solution, we observe clear second-order convergence.
These results indicate that the derived LBM is suitable for the approximation of periodic and smooth solutions.
\medskip
\\
The third test case simulates the interference of nine Gauss-pulses located in the center of the domain and at positions $\left(\frac{10\pm 3}{20},\frac{10\pm 3}{20},\frac{10\pm 3}{20}\right)^T$.
The background density $\rho_0$ is set to $\frac{1}{5}$ and the initial conditions are given as:
\begin{equation*}
	\begin{aligned}
	\rho'(t=0,\vec{x}) &= \exp\left( -100 \left| \left| \vec{x} - \begin{pmatrix}\frac{1}{2},\frac{1}{2},\frac{1}{2}\end{pmatrix}^T \right| \right|^2 \right)\\
	&+ \exp\left( -100 \left| \left| \vec{x} - \begin{pmatrix}\frac{7}{20},\frac{7}{20},\frac{7}{20}\end{pmatrix}^T \right| \right|^2 \right)
	+ \exp\left( -100 \left| \left| \vec{x} - \begin{pmatrix}\frac{7}{20},\frac{7}{20},\frac{13}{20}\end{pmatrix}^T \right| \right|^2 \right)\\
	&+ \exp\left( -100 \left| \left| \vec{x} - \begin{pmatrix}\frac{7}{20},\frac{13}{20},\frac{7}{20}\end{pmatrix}^T \right| \right|^2 \right)
	+ \exp\left( -100 \left| \left| \vec{x} - \begin{pmatrix}\frac{7}{20},\frac{13}{20},\frac{13}{20}\end{pmatrix}^T \right| \right|^2 \right)\\
	&+ \exp\left( -100 \left| \left| \vec{x} - \begin{pmatrix}\frac{13}{20},\frac{7}{20},\frac{7}{20}\end{pmatrix}^T \right| \right|^2 \right)
	+ \exp\left( -100 \left| \left| \vec{x} - \begin{pmatrix}\frac{13}{20},\frac{7}{20},\frac{13}{20}\end{pmatrix}^T \right| \right|^2 \right)\\
	&+ \exp\left( -100 \left| \left| \vec{x} - \begin{pmatrix}\frac{13}{20},\frac{13}{20},\frac{7}{20}\end{pmatrix}^T \right| \right|^2 \right)
	+ \exp\left( -100 \left| \left| \vec{x} - \begin{pmatrix}\frac{13}{20},\frac{13}{20},\frac{13}{20}\end{pmatrix}^T \right| \right|^2 \right),\\
	\vec{u}'(t=0,\vec{x}) &= \vec{0}.
	\end{aligned}
\end{equation*}
Again, analyzing Figure \ref{fig: test case 3}, we observe that a minimum resolution is required to capture the modes of the solution.
From this point on, we find clear second-order convergence.
This indicates that the scheme derive in section \ref{sec: 3D LEE} is capable of solving the isothermal linearized Euler equations for initial conditions containing steep gradients.
\medskip
\\
The results presented in this section show that the structured approach described in section \ref{sec: relative} and \ref{sec: Partially} suitable for the derivation of stable linear collision operators.

\begin{figure}
	\begin{subfigure}{0.45\textwidth}
		\includegraphics[width=\textwidth]{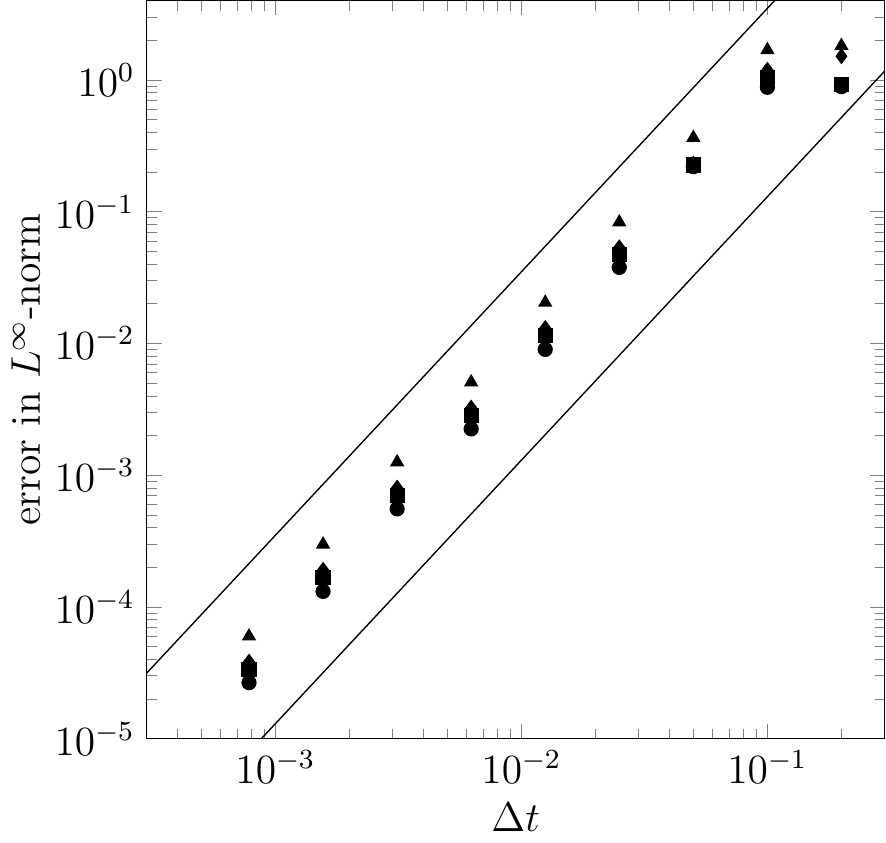}
		\caption{Convergence results for test case 1 in the discrete $L^\infty$-norm measured against the exact solution.}
		\label{fig: test case 1}
	\end{subfigure}
	\hfill
	\begin{subfigure}{0.45\textwidth}
		\includegraphics[width=\textwidth]{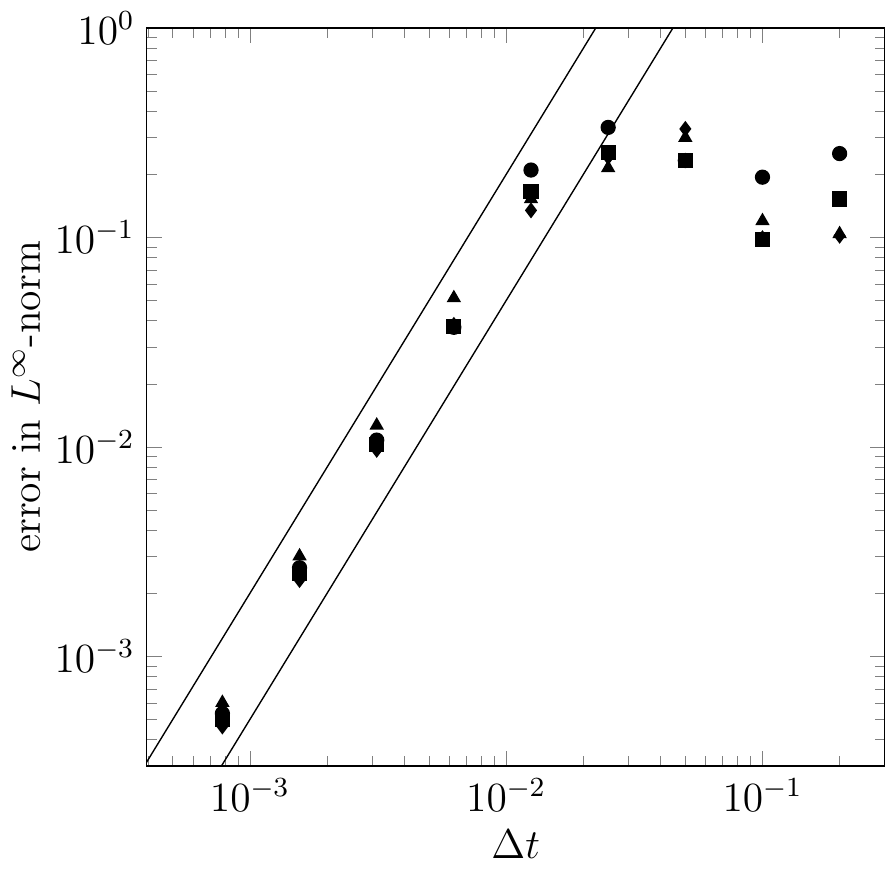}
		\caption{Convergence results for test case 2 in the discrete $L^\infty$-norm measured against the exact solution.}
		\label{fig: test case 2}
	\end{subfigure}
	\\
	\begin{subfigure}{0.45\textwidth}
		\includegraphics[width=\textwidth]{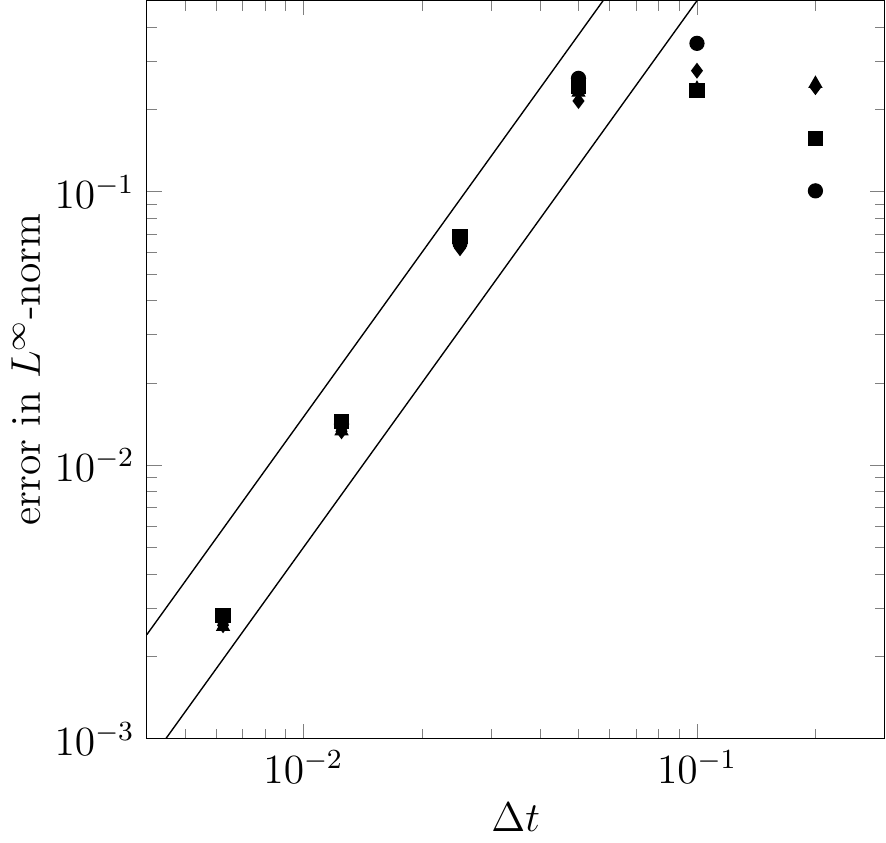}
		\caption{Convergence results for test case 3 in the discrete $L^\infty$-norm measured against a highly resolved numerical solution.}
		\label{fig: test case 3}
	\end{subfigure}
	\caption{Convergence results for test cases 1, 2, and 3 for the isothermal linearized Euler equations, i.e. for $\tau = \frac{1}{2}$, measured in the discrete $L^\infty$-norm. Legend: circles: isothermal flow with background velocity $\vec{u}_0 = \frac{1}{\sqrt{3}} \left( \frac{3}{20}, \frac{1}{10}, \frac{1}{5} \right)^T$; squares: flow with background velocity $\vec{u}_0 = \frac{1}{\sqrt{3}} \left( -\frac{1}{4}, \frac{1}{4}, \frac{1}{2} \right)^T$; diamonds: flow with background velocity $\vec{u}_0 = \frac{1}{\sqrt{3}} \left( \frac{2}{5}, \frac{9}{10}, \frac{3}{4} \right)^T$; triangles: flow with background velocity $\vec{u}_0 = \frac{1}{\sqrt{3}} \left( \frac{3}{4}, \frac{5}{8}, 1 \right)^T$; and solid lines: $2^{\text{nd}}$ order.}
	\label{fig: results}
\end{figure}

\section{Conclusion}

We introduced a structured approach for the derivation of stable LBMs with linear collision operators in sections \ref{sec: relative} and \ref{sec: Partially}.
Instead of deriving collision operators starting from a physical simplifying concept, the partially relative schemes are built on the notion of stability structures as guiding principle.
In section \ref{sec: 3D LEE}, we presented the derivation of partially relative schemes for the example of the 3D isothermal linearized Euler equations.
Due to their linearized structure, these schemes include a multitude of parameters which renders the derivation of stable collision operators challenging if no structured approach is chosen.
Finally, in section \ref{sec: results}, we verified that the collision operator derived in section \ref{sec: 3D LEE} are second-order convergent.

\bibliographystyle{plain}
\bibliography{LBM}

\end{document}